\documentclass[12pt]{amsart}
\oddsidemargin 3ex
\evensidemargin 3ex
\textheight 7.9in
\textwidth 6.0in

\font\bbbld=msbm10 scaled\magstephalf

\newcommand{\bfH}{\hbox{\bbbld H}}
\newcommand{\bfR}{\hbox{\bbbld R}}
\newcommand{\bfS}{\hbox{\bbbld S}}
\newcommand{\ve}{{\bf e}}
\newcommand{\vs}{{\bf s}}

\newcommand{\vz}{{\bf z}}
\newcommand{\e}{\varepsilon}
\newcommand{\goto}{\rightarrow}

\newcommand{\ol}{\overline}

\newcommand{\be}{\begin{equation}}
\newcommand{\ee}{\end{equation}}

\newtheorem{theorem}{Theorem}[section]
\newtheorem{lemma}[theorem]{Lemma}
\newtheorem{proposition}[theorem]{Proposition}
\newtheorem{corollary}[theorem]{Corollary}

\theoremstyle{definition}

\theoremstyle{remark}

\numberwithin{equation}{section}



\begin{document}
\setlength{\baselineskip}{1.2\baselineskip}

\title[Hypersurfaces of Constant Curvature]
{
Hypersurfaces of Constant Curvature in Hyperbolic Space II.}
\author{Bo Guan}
\address{Department of Mathematics, Ohio State University,
         Columbus, OH 43210}
\email{guan@math.osu.edu}
\author{Joel Spruck}
\address{Department of Mathematics, Johns Hopkins University,
 Baltimore, MD 21218}
\email{js@math.jhu.edu}
\thanks{Research of both authors was supported in part
by NSF grants.}



\maketitle

\section{Introduction}
\label{sec1}
\setcounter{equation}{0}

In this paper we continue our study  of   complete hypersurfaces in
hyperbolic space $\bfH^{n+1}$ of constant curvature  with a
prescribed asymptotic boundary at infinity. Given $\Gamma \subset
\partial_{\infty} \bfH^{n+1}$  and a smooth symmetric function $f$ of
$n$ variables, we seek a complete hypersurface $\Sigma$ in
$\bfH^{n+1}$ satisfying
\begin{equation}
\label{eq1.10}
f(\kappa[\Sigma]) = \sigma
\ee
\be \label{eq1.20}
\partial \Sigma = \Gamma
\end{equation}
where $\kappa[\Sigma] = (\kappa_1, \dots, \kappa_n)$
denotes the hyperbolic principal curvatures of $\Sigma$ and $\sigma \in (0,1)$ 
is a constant.

We will use the half-space model,
\[ \bfH^{n+1} = \{(x, x_{n+1}) \in \bfR^{n+1}: x_{n+1} > 0\} \]
equipped with the hyperbolic metric
\begin{equation}
 ds^2 = \frac{1}{x_{n+1}^2} \sum_{i=1}^{n+1} dx_i^2\,.
\end{equation}
Thus $\partial_\infty \bfH^{n+1}$ is naturally identified with
$\bfR^n = \bfR^n \times \{0\} \subset \bfR^{n+1}$ and (\ref{eq1.20}) may
be understood in the Euclidean sense.

As in in our earlier work \cite{RS94, NS96, GS00, GSZ08}, we will take
 $\Gamma=\partial \Omega$ where
$\Omega \subset \bfR^n$ is a smooth domain and seek $\Sigma$ as  the
graph of  a function $u(x)$ over $\Omega$, i.e.
  \[\Sigma=\{(x,x_{n+1}): x\in \Omega,~ x_{n+1}=u(x)\}.\]
 Then  the coordinate vector fields and upper unit normal are  given by
 \[X_i=e_i+u_i e_{n+1},~{\bf n}=u\nu=u\frac{(-u_i e_i+e_{n+1})}{w},\]
 where $ w=\sqrt{1+|\nabla u|^2}$.
 The first fundamental form $g_{ij}$ is then given by
  \begin{equation}
\label{eq1}
 g_{ij} = \langle X_i,X_j \rangle
        = \frac1{u^2}(\delta_{ij} + u_i u_j)=\frac{g^e_{ij}}{u^2}~.
\end{equation}
To compute the second fundamental form $h_{ij}$ we use
\be \Gamma_{ij}^k=\frac1{x_{n+1}}\{-\delta_{jk}\delta_{i n+1}-\delta_{ik}\delta_{j n+1}+\delta_{ij}\delta_{k n+1}\}
\ee
to obtain
\be
\nabla_{X_i}X_j=(\frac{\delta_{ij}}{x_{n+1}}+u_{ij}-\frac{u_i u_j}{x_{n+1}})e_{n+1}-\frac{u_j e_i+u_i e_j}{x_{n+1}}~. 
\ee Then
\begin{equation}
\label{eq1.0}
\begin{aligned}
 h_{ij}& =  \, \langle \nabla_{X_i}X_j,u\nu \rangle =\frac1{uw}(\frac{\delta_{ij}}u+ u_{ij}-\frac{u_i u_j}u+2\frac{u_i u_j}u)\\
       & =  \, \frac1{u^2 w}{(\delta_{ij}+u_i u_j +u u_{ij})} 
        =  \, \frac{h^e_{ij}}{u}+\frac{g^e_{ij}}{u^2 w}.
 \end{aligned}
 \end{equation}
The hyperbolic principal curvatures $\kappa_i$ of $\Sigma$ are the
roots of the characteristic equation
\[\det(h_{ij}-\kappa g_{ij})
    =u^{-n}\det(h^e_{ij}-\frac1u(\kappa-\frac1w)g^e_{ij})=0.\]
Therefore,
 \be
 \label{eq1.30}
 \kappa_i=u\kappa^e_i +\frac{1}{w}.
 \ee
We will present other more explicit and useful expressions for the
$\kappa_i$ in Section \ref{sec2}.

The function $f$ is assumed to satisfy the fundamental structure
conditions:
\begin{equation}
\label{eq1.40}
f_i (\lambda) \equiv \frac{\partial f (\lambda)}{\partial \lambda_i} > 0
  \;\; \mbox{in $K$}, \;\; 1 \leq i \leq n,
\end{equation}
\begin{equation}
\label{eq1.50}
\mbox{$f$ is a concave function in $K$},
\end{equation}
and
\begin{equation}
\label{eq1.60}
 f > 0 \;\;\mbox{in $K$},
  \;\; f = 0 \;\;\mbox{on $\partial K$}
\end{equation}
where $K \subset \bfR^n$ is an open symmetric convex cone such that
\begin{equation}
 K^+_n := \big\{\lambda \in \bfR^n:
   \mbox{each component $\lambda_i > 0$}\big\} \subset K.
 \end{equation}
 In addition, we shall assume that $f$ is normalized
\begin{equation}
\label{eq1.70}
f(1, \dots, 1) = 1
\end{equation}
and
\begin{equation}
\label{eq1.80}
 \mbox{$f$ is homogeneous of degree one.}
\end{equation}

Since $f$ is symmetric, by (\ref{eq1.50}),
(\ref{eq1.70}) and (\ref{eq1.80}) we have
\begin{equation}
\label{eq1.90}
f (\lambda) \leq f ({\bf 1}) + \sum f_i ({\bf 1}) (\lambda_i - 1)
= \sum f_i ({\bf 1}) \lambda_i  = \frac{1}{n} \sum \lambda_i
\;\;\mbox{in $K $}
\end{equation}
and
\begin{equation}
\label{eq1.100}
 \sum f_i (\lambda) = f (\lambda) + \sum f_i (\lambda) (1 - \lambda_i)
\geq f ({\bf 1}) = 1 \;\;\mbox{in $K$}.
\end{equation}

\begin{lemma}
\label{lem1.10}
 Suppose f satisfies (\ref{eq1.40})-(\ref{eq1.80}). Then
\be
 \sum_{i \neq r} f_i \lambda_i^2  \geq 
 \frac1{n-1} (2f |\lambda_r|+ f_r \lambda_r^2 )~\,\, \mbox{if $\lambda_r<0$}
\ee
and so
 \be
 \sum_{i\neq r} f_i \lambda_i^2  \geq \frac1n  \sum f_i \lambda_i^2  ~\,\, \mbox{if $\lambda_r<0$}.
 \ee
\end{lemma}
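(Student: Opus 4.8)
The plan is to reduce the first inequality to the case in which $\lambda_r$ is a \emph{smallest} component of $\lambda$, and then to produce the factor $\frac{1}{n-1}$ from a single application of the Cauchy--Schwarz inequality. I will use two standard facts. First, from symmetry of $f$ together with concavity (\ref{eq1.50}): if $\lambda_i\le\lambda_j$ then $f_i(\lambda)\ge f_j(\lambda)$. This holds because the restriction of $f$ to the line $t\mapsto\lambda+t(e_i-e_j)$ is concave and, by symmetry, is even about the parameter value at which the $i$-th and $j$-th coordinates agree (which lies on the $t\ge 0$ side when $\lambda_i\le\lambda_j$), so its derivative at $t=0$, namely $f_i(\lambda)-f_j(\lambda)$, is $\ge 0$. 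Second, Euler's relation $\sum_k f_k\lambda_k=f$, which is (\ref{eq1.80}) and is already implicit in (\ref{eq1.100}).

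\emph{Reduction.} Fix $r$ with $\lambda_r<0$ and choose $s$ with $\lambda_s=\min_k\lambda_k$; then $\lambda_s\le\lambda_r<0$. By the monotonicity above, $f_s\ge f_r>0$, while $\lambda_s^2\ge\lambda_r^2$; hence $f_s\lambda_s^2\ge f_r\lambda_r^2$ and $|\lambda_s|\ge|\lambda_r|$. Therefore
\[
\sum_{i\neq r}f_i\lambda_i^2=\sum_k f_k\lambda_k^2-f_r\lambda_r^2\ \ge\ \sum_k f_k\lambda_k^2-f_s\lambda_s^2=\sum_{i\neq s}f_i\lambda_i^2,
\]
while $2f|\lambda_r|+f_r\lambda_r^2\le 2f|\lambda_s|+f_s\lambda_s^2$. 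So it suffices to prove the first inequality for $r=s$, i.e.\ when $\lambda_r$ is minimal.

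In that case $f_i\le f_r$ for every $i$, so $\sum_{i\neq r}f_i\le(n-1)f_r$; and since $\lambda_r<0$, Euler's relation gives $\sum_{i\neq r}f_i\lambda_i=f-f_r\lambda_r=f+f_r|\lambda_r|>0$. By Cauchy--Schwarz with weights $f_i$,
\[
\sum_{i\neq r}f_i\lambda_i^2\ \ge\ \frac{\left(\sum_{i\neq r}f_i\lambda_i\right)^2}{\sum_{i\neq r}f_i}\ \ge\ \frac{(f+f_r|\lambda_r|)^2}{(n-1)f_r}\ \ge\ \frac{2f f_r|\lambda_r|+f_r^2\lambda_r^2}{(n-1)f_r}=\frac{2f|\lambda_r|+f_r\lambda_r^2}{n-1},
\]
the last step discarding $f^2\ge0$. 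This proves the first assertion. The second then follows at once: dropping the nonnegative term $2f|\lambda_r|$ (using $f>0$, (\ref{eq1.60})) gives $(n-1)\sum_{i\neq r}f_i\lambda_i^2\ge f_r\lambda_r^2$, hence $n\sum_{i\neq r}f_i\lambda_i^2\ge\sum_{i\neq r}f_i\lambda_i^2+f_r\lambda_r^2=\sum_k f_k\lambda_k^2$.

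The only point that needs care is the reduction: the Cauchy--Schwarz bound carries the factor $n-1$ only because $\sum_{i\neq r}f_i\le(n-1)f_r$ when $\lambda_r$ is minimal, and this estimate genuinely fails for some negative but non-minimal $\lambda_r$. It is precisely the monotonicity of $f_i$ against $\lambda_i$, together with $|\lambda_s|\ge|\lambda_r|$, that lets one trade the $r$-term for the (larger) $s$-term on both sides of the inequality simultaneously. Beyond this I expect no difficulty.
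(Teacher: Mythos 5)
Your proof is correct and follows essentially the same route as the paper: reduce to the case where $\lambda_r$ is the most negative eigenvalue via the concavity/symmetry monotonicity $f_i\ge f_j$ for $\lambda_i\le\lambda_j$, then apply the weighted Cauchy--Schwarz inequality together with Euler's relation $\sum_{i\neq r}f_i\lambda_i=f+f_r|\lambda_r|$ and the bound $\sum_{i\neq r}f_i\le (n-1)f_r$, discarding $f^2\ge 0$. The only cosmetic difference is that you transfer from $r$ to the minimal index before applying Cauchy--Schwarz, whereas the paper argues at the minimal index and transfers back at the end.
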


\begin{proof}
 Suppose $\lambda_r<0$ and order the eigenvalues with $\lambda_1>0$ the largest and  $\lambda_n<0$ the smallest.
Then as a consequence of the  concavity condition (\ref{eq1.50}) we
have
 \be
  \label{eq1.120}
 f_n \geq f_i \,\,\mbox{ for all $i$}
\hspace{.1in} \mbox{and so} \hspace{.1in}
 f_n \lambda_n^2   \geq  f_r \lambda_r^2.
 \ee
 By (\ref{eq1.80}),
 \[ \sum_{i\neq n}f_i \lambda_i = f +  f_n |\lambda_n|. \]
By Schwarz inequality and (\ref{eq1.120}),
\[ f^2 + 2 f f_n |\lambda_n| + f_n^2 \lambda_n^2
   \leq \sum_{i\neq n} f_i \sum_{i \neq n} f_i \lambda_i^2
   \leq (n-1) f_n \sum_{i \neq n}  f_i \lambda_i^2. \]
 Therefore,
\[\sum_{i \neq n}  f_i \lambda_i^2 \geq \frac1{n-1}(2f |\lambda_n|+  f_n \lambda_n^2).\]
Using (\ref{eq1.120}) this implies
 \be
 \label{eq1.130}
 \sum_{i \neq r} f_i \lambda_i^2 \geq  \sum_{i \neq n} f_i \lambda_i^2 \geq
\frac1{n-1}(2f |\lambda_n|+ f_n \lambda_n^2)
 \geq \frac1{n-1}(2f |\lambda_r |+ f_r \lambda_r^2)
\ee completing the proof.
\end{proof}

All of the above assumptions (\ref{eq1.40})-(\ref{eq1.80}) are fairly standard.
In the present work, the following more technical assumption is important.
\begin{equation}
\label{eq1.110}
\lim_{R \rightarrow + \infty}
   f (\lambda_1, \cdots, \lambda_{n-1}, \lambda_n + R)
    \geq 1 + \varepsilon_0 \;\;\;
\mbox{uniformly in $B_{\delta_0} ({\bf 1})$}
\end{equation}
for some fixed $\varepsilon_0 > 0$ and $\delta_0 > 0$,
where $B_{\delta_0} ({\bf 1})$ is the ball
of radius $\delta_0$ centered at ${\bf 1} = (1, \dots, 1) \in \bfR^n$.

The assumption (\ref{eq1.110}) is fairly mild. For $f=H_k^{\frac1k}$
corresponding to the ``higher order mean curvatures'', where $H_k$
is the $k$-th normalized elementary function,
\[\lim_{R\goto \infty} f({\bf 1}+O(\e)+Re_n)=\infty\]
while for  $f=(H_{k,l})^{\frac1{k-l}}=(\frac{H_k}{H_l})^{\frac1{k-l}}~,~k>l$, 
the class of curvature quotients,
\[ \lim_{R\goto \infty}f({\bf 1}+O(\e)+Re_n)
   =(1+O(\e))\Big(\frac{k}{l}\Big)^{\frac1{k-l}}.\]

Problem (\ref{eq1.10})-(\ref{eq1.20}) reduces to the Dirichlet
problem for a fully nonlinear second order equation which we shall
write in the form
\begin{equation}
\label{eq1.140}
G(D^2u, Du, u) = \sigma,
\;\; u > 0 \;\;\; \text{in $\Omega \subset \bfR^n$}
\end{equation}
with the boundary condition
\begin{equation}
\label{eq1.150}
             u = 0 \;\;\;    \text{on $\partial \Omega$}.
\end{equation}
The exact formula of $G$ will be given in Section~\ref{sec2}.

We seek solutions of the Dirichlet problem (\ref{eq1.140})-(\ref{eq1.150})
satisfying $\kappa [u] \equiv \kappa [\mbox{graph} (u)] \in K$. 
Following the literature we define the class of {\em admissible} functions
\[ \mathcal{A} (\Omega) = \{u \in C^2 (\Omega): \kappa [u] \in K\}.
\]


Our main result of the paper may be stated as follows. 

\begin{theorem}
\label{th1.1}
Let $\Gamma = \partial \Omega \times \{0\} \subset \bfR^{n+1}$
where $\Omega$ is a bounded smooth domain in $\bfR^n$ . Suppose that
the Euclidean mean curvature $\mathcal{H}_{\partial \Omega}\geq 0$
and  $\sigma \in (0, 1)$ satisfies
$\sigma>\sigma_0$, where $\sigma_0$ is the unique zero in $(0,1)$ of
\be
 \phi(a):=\frac83 a+\frac{22}{27}a^3-\frac5{27}(a^2+3)^{\frac32}.
 \ee
 (Numerical calculations show $0.3703<\sigma_0<0.3704$.)

Under conditions (\ref{eq1.40})-(\ref{eq1.80}) and  (\ref{eq1.110}),
there exists a complete hypersurface $\Sigma$ in $\bfH^{n+1}$
satisfying (\ref{eq1.10})-(\ref{eq1.20}) with uniformly bounded
principal curvatures
\begin{equation}
 |\kappa [\Sigma]| \leq C \;\; \mbox{on $\Sigma$}.
\end{equation}
Moreover, $\Sigma$ is the graph of a unique admissible solution $u \in C^\infty
(\Omega) \cap C^1 (\bar{\Omega})$ of the Dirichlet problem
(\ref{eq1.140})-(\ref{eq1.150}).
 Furthermore, $u^2 \in C^{\infty} (\Omega)\cap C^{1,1}(\ol{\Omega})$ and
\begin{equation}
\begin{aligned}
&\,\sqrt{1 + |Du|^2} \leq \frac{1}{\sigma}, \;\; u|D^2 u| \leq C
\;\;\; \mbox{in $\Omega$}, \\
&\, \sqrt{1 + |Du|^2} = \frac{1}{\sigma}
\;\;\; \mbox{on $\partial \Omega$}.
\end{aligned}
\end{equation}
\end{theorem}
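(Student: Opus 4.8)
The plan is to solve the Dirichlet problem \eqref{eq1.140}--\eqref{eq1.150} by the method of continuity, establishing a priori estimates up to second order (plus the boundary gradient bound) so that Evans--Krylov theory and the Schauder estimates bootstrap to $C^\infty$ interior regularity, and then to read off the geometric conclusions. First I would set up the continuity path: for $t \in [0,1]$ deform the prescribed function $\sigma$ (or the domain, or both) from a model problem whose solvability is known — for instance the case where $\Sigma$ is a piece of a geodesic sphere or the solution of \eqref{eq1.10} for the mean curvature operator on a suitably chosen $\Omega_t$ — to the target problem. Openness along the path is the linearized invertibility, which follows from \eqref{eq1.40} (ellipticity), the strict obliqueness of the boundary condition that $u=0$ together with the explicit form of $G$ from Section~\ref{sec2}, and the maximum principle; the admissibility condition $\kappa[u]\in K$ is preserved because $K$ is open and we will have closed a priori estimates forcing the solution into a compact subset of $K$. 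So the real content is the closedness step: the a priori estimates.

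The estimates proceed in the usual hierarchy. For the $C^0$ bound one shows $0 < u \le C$ using barriers built from upper hemispheres (coordinate spheres in $\bfH^{n+1}$ are the natural comparison surfaces since they have constant curvature $1 > \sigma$), combined with the structure inequality \eqref{eq1.90} which gives $f(\kappa) \le \frac1n\sum\kappa_i$ and hence controls the mean curvature of the graph from above. For the global gradient estimate one uses \eqref{eq1.30}, $\kappa_i = u\kappa_i^e + 1/w$, together with the admissibility $\kappa[\Sigma] \in K \subset$ the cone where each $f_i > 0$; testing $f$ against the direction $e_n$ and invoking the key hypothesis \eqref{eq1.110} is what produces the sharp bound $w = \sqrt{1+|Du|^2} \le 1/\sigma$: if $|Du|$ were too large somewhere in the interior, then $1/w$ would be small, the remaining curvature $u\kappa^e$ would have to be large in the $e_n$-direction, and \eqref{eq1.110} would force $f(\kappa) \ge 1+\varepsilon_0 > \sigma$, a contradiction. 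On $\partial\Omega$ the condition $u=0$ combined with \eqref{eq1.30} pins $w$ to exactly $1/\sigma$ there, and the Euclidean boundary-curvature hypothesis $\mathcal H_{\partial\Omega}\ge 0$ enters in constructing local barriers near $\partial\Omega$ for the boundary gradient estimate.

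The delicate part — and what I expect to be the main obstacle — is the boundary estimate for the second derivatives, and then the global $C^2$ bound. Near $\partial\Omega$ one works with the function $v = u^2$ (the theorem explicitly asserts $u^2 \in C^{1,1}(\bar\Omega)$, which signals that $u$ itself is only Lipschitz up to the boundary and the right variable is $u^2$); the pure tangential and the mixed tangential-normal second derivatives are handled by differentiating the equation along $\partial\Omega$ and using barriers, while the double-normal derivative requires showing the relevant quantity stays in the elliptic regime, which is precisely where the restriction $\sigma > \sigma_0$ with $\sigma_0$ the root of $\phi(a) = \frac83 a + \frac{22}{27}a^3 - \frac5{27}(a^2+3)^{3/2}$ must come from — a computation balancing the contributions of the barrier against the curvature terms forces exactly this threshold. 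For the interior/global $C^2$ bound one applies the maximum principle to a test function of the form (largest principal curvature)$\cdot$(cutoff depending on $u$ and $w$); differentiating \eqref{eq1.10} twice, the concavity \eqref{eq1.50} controls the bad third-order terms, and Lemma~\ref{lem1.10} is the tool that handles the terms involving negative principal curvatures, giving $\sum_{i\ne r} f_i\kappa_i^2 \ge \frac1n\sum f_i\kappa_i^2$ and thereby absorbing them. Once $|u D^2u| \le C$ and the gradient bounds are in hand, $\kappa[\Sigma]$ is bounded, $f$ is uniformly elliptic on the solution, Evans--Krylov gives $C^{2,\alpha}$ interior, Schauder bootstraps to $C^\infty$, uniqueness follows from the comparison principle for the admissible class, and completeness of $\Sigma$ follows since $u \to 0$ at $\partial\Omega$ with $|Du|$ bounded, making the hyperbolic metric $ds^2 = u^{-2}(g^e)$ complete.
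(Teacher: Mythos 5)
Your outline has the right general shape (continuity method, a priori estimates, Evans--Krylov, barriers), but it misplaces the three ingredients that actually carry the proof, and it omits the structural step the paper cannot do without. First, the equation \eqref{eq1.140} is \emph{degenerate} where $u=0$, so one cannot run the continuity method directly on the Dirichlet problem \eqref{eq1.140}--\eqref{eq1.150} as you propose; the paper instead solves the regularized problems $u=\e$ on $\partial\Omega$ (Theorem~\ref{th1.2}, Section~\ref{sec4}) and then lets $\e\to 0$. This forces every estimate to be \emph{uniform in $\e$}: the boundary bound $u|D^2u|\le C$ of Theorem~\ref{th5.1} and, crucially, the global curvature bound of Theorem~\ref{thA1}, since the global estimate obtainable from the maximum principle alone (Corollary~\ref{cor5.1}) degenerates like $C/\e^2$ and does not survive the limit. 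Your proposal never confronts this uniformity issue, which is the central difficulty of the paper; interior Evans--Krylov on compact subdomains (using the interior lower bound on $u$ from Lemma~\ref{lem3.0}) is what then produces the $C^\infty(\Omega)$ limit.

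Second, your mechanisms for the two quantitative hypotheses are wrong. The sharp gradient bound $w\le 1/\sigma$ does not come from \eqref{eq1.110}: in the paper it is Proposition~\ref{prop4.1}, proved by noting $\mathcal{L}u_k=0$ so $w$ attains its maximum on $\partial\Omega$, and then at a boundary maximum using $u=\e$, $\mathcal{H}_{\partial\Omega}\ge 0$ and the mean-curvature inequality \eqref{eq1.90} (hyperbolic mean curvature $\ge\sigma$) to force $\sigma-\tfrac1w\le 0$. The hypothesis \eqref{eq1.110} enters only in the pure normal second derivative estimate $\e|u_{nn}(0)|\le R$ at the end of the proof of Theorem~\ref{th5.1}, where a large $\e u_{nn}$ would make $f$ exceed $1+\e_0/2>\sigma w$, a contradiction; your heuristic that large $|Du|$ forces a large curvature in the $e_n$ direction is not correct. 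Third, the threshold $\sigma>\sigma_0$ with $\phi(a)=\tfrac83 a+\tfrac{22}{27}a^3-\tfrac5{27}(a^2+3)^{3/2}$ does not arise in the boundary double-normal estimate as you claim; it comes from the \emph{global} maximum principle for $\kappa_{\max}/(\eta-a)$ in Section~\ref{sec6} (Theorem~\ref{thA1}), where the sign of the coefficient $\phi_\theta(y)$ of $\kappa_1\sum_{i\in J}f_i$ in \eqref{gsz-G500'} requires $\phi(a)>0$, i.e.\ $a>\sigma_0$. (Relatedly, Lemma~\ref{lem1.10} is used in the boundary mixed normal--tangential estimate, Proposition~\ref{prop5.3}, not in the global curvature bound.) As written, your argument would not close: without the $\e$-regularization and the uniform-in-$\e$ boundary and global curvature estimates, and with \eqref{eq1.110} and $\sigma>\sigma_0$ deployed in the wrong places, the key steps of the proof are missing.
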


Theorem~\ref{th1.1} holds for a large family of
$f=\frac1N\sum_{l=1}^N  f_l $ where each $f_l$ consisting of sums
and ``concave products" (that is of the form $(f_1 \cdots
f_{N_l})^\frac1{N_l})$ where each $f_l$  satisfies
(\ref{eq1.40})-(\ref{eq1.80}) and one of the $f_l$ satisfies
(\ref{eq1.110}). 

By \cite{CNS3} condition~(\ref{eq1.40}) implies that equation
(\ref{eq1.140}) is elliptic for admissible solutions. 
As we  shall see in Section~\ref{sec2}, equation (\ref{eq1.140}) is
degenerate where $u = 0$. It is therefore natural to approximate the
boundary condition (\ref{eq1.150}) by
\begin{equation}
\label{eq1.160}
             u = \e > 0 \;\;\;  \text{on $\partial \Omega$}.
\end{equation}
When $\e$ is sufficiently small, the Dirichlet problem
(\ref{eq1.140}),(\ref{eq1.160}) is solvable for all $\sigma \in (0, 1)$.

\begin{theorem}
\label{th1.2}
Let $\Omega$ be a bounded smooth domain in $\bfR^n$ with
$\mathcal{H}_{\partial \Omega}\geq 0$
 and suppose $f$ satisfies (\ref{eq1.40})-(\ref{eq1.80}) and (\ref{eq1.110}).
Then for any $\sigma \in (0, 1)$ and
$\e > 0$ sufficiently small, there exists a unique admissible
solution $u^{\e} \in C^\infty (\bar{\Omega})$ of the Dirichlet
problem (\ref{eq1.140}),(\ref{eq1.160}). Moreover, $u^{\e}$
satisfies the {\em a priori} estimates
\begin{equation} \label{eq.grad}
\sqrt{1 + |D u^{\e}|^2} \leq \frac{1}{\sigma}
\;\;\; \mbox{in $\Omega$}
\end{equation}
\begin{equation}
u^{\e}|D^2 u^{\e}| \leq \frac{C}{\e^2}
\;\;\; \mbox{in $\Omega$}
\end{equation}
where $C$ is independent of $\e$.
\end{theorem}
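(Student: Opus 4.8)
The plan is to solve the approximate Dirichlet problem (\ref{eq1.140}),(\ref{eq1.160}) by the continuity method, so the heart of the matter is establishing the \emph{a priori} estimates up to second order for admissible solutions; higher regularity then follows from the Evans--Krylov theorem (using concavity (\ref{eq1.50})) and Schauder theory, and uniqueness follows from the ellipticity (\ref{eq1.40}) together with a standard maximum-principle/comparison argument once the structure of $G$ is in hand. The $C^0$ bound is the easy part: since $\sigma\in(0,1)$, a barrier argument (spheres in $\bfH^{n+1}$, equivalently upper hemispheres in the half-space model) pinned along $\partial\Omega$ at height $\e$ gives $\e\le u^\e\le C(\Omega)$, where the upper bound uses that hyperbolic spheres have $f(\kappa)\equiv$ their (constant) curvature $>\sigma$ once their Euclidean radius is large, and the assumption $\mathcal H_{\partial\Omega}\ge0$ enters to ensure such barriers can be fitted from outside. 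The gradient bound (\ref{eq.grad}) is the crucial, and somewhat surprising, estimate: one shows $w=\sqrt{1+|Du|^2}\le 1/\sigma$ directly. The cleanest route is to note from (\ref{eq1.30}) that $\kappa_i=u\kappa_i^e+1/w$, hence if at an interior maximum of $w$ (equivalently a minimum of the angle function) one has $\kappa_i^e\le 0$ in the relevant directions, then $\kappa_i\le 1/w$ there, so $\sigma=f(\kappa)\le f(1/w,\dots,1/w)=1/w$ by (\ref{eq1.70})--(\ref{eq1.80}); the geometric input is that at a maximum of the height-normalized angle the graph is locally "convex from above" in the right sense, a computation already available in \cite{GS00} and the earlier papers; alternatively one tests the equation against $w$ and uses concavity to kill the bad third-derivative terms. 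The boundary gradient estimate sharpens this to equality on $\partial\Omega$ via the explicit barrier.

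The main obstacle is the interior second-derivative estimate $u^\e|D^2u^\e|\le C/\e^2$. Here I would follow the by-now-standard but delicate maximum-principle argument for the largest principal curvature: set $W=\max_{\Sigma}\big(\kappa_{\max} e^{\phi(|X|^2)+\psi(\nu)}\big)$ for suitably chosen auxiliary functions, differentiate the equation $F(h_{ij})=\sigma$ twice, and control the resulting terms. The three recurring difficulties are (i) the third-order terms, which are handled by the concavity inequality for $F$ together with an argument treating the "good" term $\sum F^{ij,kl}h_{ij1}h_{kl1}$ versus terms coming from distinct eigenvalues — this is exactly where Lemma~\ref{lem1.10} and the structure conditions are used to absorb the bad directions; (ii) the terms with no a priori sign coming from the ambient hyperbolic curvature and the $1/w$ term in (\ref{eq1.30}) — these are where the gradient bound $w\le1/\sigma$ and hence $\sigma>\sigma_0$ is \emph{not} needed for Theorem~\ref{th1.2} (it will be needed only in Theorem~\ref{th1.1} for the passage $\e\to0$), but one still needs to control quantities like $u^{-1}$, which is where the explicit $\e^{-2}$ enters; (iii) closing the estimate when the maximum eigenvalue has high multiplicity, handled by the usual perturbation/ordering device. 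Throughout, one works with the rescaled metric and with $v=u^2$ where convenient, since $G$ is better behaved in terms of $v$.

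Concretely, the steps in order are: (1) write out $G$ and verify ellipticity and the structural identities from Section~\ref{sec2}; (2) prove $\e\le u^\e\le C$ by barriers, using $\mathcal H_{\partial\Omega}\ge0$; (3) prove $w\le1/\sigma$ in $\Omega$ by the interior maximum-principle computation on the angle function, and $w=1/\sigma$ on $\partial\Omega$ by a boundary barrier; (4) prove the boundary estimate for second derivatives, separating tangential-tangential (from differentiating the boundary condition), tangential-normal, and normal-normal (the last from the equation plus a barrier for the double normal derivative), all of which produce the $C/\e^2$ because the coefficients degenerate like $u^2$ near where $u=\e$; (5) prove the global second-derivative estimate by the $\kappa_{\max}$ maximum-principle argument above, reducing the interior bound to the boundary bound from (4); (6) invoke Evans--Krylov and Schauder to get $C^\infty(\bar\Omega)$ bounds, then run the continuity method from the known solution at, say, a value of $\sigma$ where a rotationally symmetric solution exists, using the openness from the implicit function theorem (ellipticity) and closedness from the estimates; (7) deduce uniqueness from the comparison principle. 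The step I expect to fight hardest with is (5), and within it the simultaneous control of the third-order terms and the ambient curvature terms without yet having $\sigma>\sigma_0$ — one accepts the worse dependence $C/\e^2$ precisely to avoid needing that restriction here.
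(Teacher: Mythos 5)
Your overall skeleton (continuity method plus $C^0$, $C^1$, boundary and global $C^2$ estimates, then Evans--Krylov) matches the paper, and you correctly observe that $\sigma>\sigma_0$ is not needed here and that the global curvature bound reduces the interior estimate to the boundary one. But there are genuine gaps at the two places the paper identifies as the crux. First, the sharp gradient bound (\ref{eq.grad}): your interior-maximum ``angle function'' argument is only a heuristic (at an interior maximum of $w$ one gets $\sum_k u_ku_{ki}=0$, not any sign on the Euclidean curvatures), and your boundary barrier can only give $\nu^{n+1}\geq\sigma-C\e$ (as in Lemma~\ref{lem3.1}), hence $w\leq 1/(\sigma-C\e)$, not the sharp constant $1/\sigma$. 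Moreover you never use $\mathcal H_{\partial\Omega}\geq 0$ where it is actually needed: not for the $C^0$ bound (the equidistant-sphere barriers of Lemma~\ref{lem3.0} need no curvature assumption on $\partial\Omega$), but for the gradient bound. The paper's route (Proposition~\ref{prop4.1}) is different and essential: along the continuity path one shows $G_u\leq 0$, so $\mathcal L$ satisfies the maximum principle; since $\mathcal Lu_k=0$, the maximum of $w$ is attained on $\partial\Omega$, and there the constancy of $u$, the inequality (\ref{eq1.90}) (hyperbolic mean curvature $\geq\sigma$) and $\mathcal H_{\partial\Omega}\geq 0$ force $w\leq 1/\sigma^t$. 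This is intertwined with the sign $G_u\leq 0$ itself (via $uG_u\leq\sigma^t-1/w$), which you gloss over but which is exactly what gives uniqueness and the invertibility of the linearized operator for the continuity method --- ellipticity alone does not; relatedly, your proposed starting point (``a value of $\sigma$ with a rotationally symmetric solution'') does not exist for a general domain, whereas the paper deforms $\sigma^t=t\sigma+(1-t)$ from $t=0$, where $u\equiv\e$ is an exact solution.

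Second, hypothesis (\ref{eq1.110}) never appears in your argument, yet it is indispensable for the double-normal boundary estimate: in the paper, if $\e u_{nn}(0)$ is large then the eigenvalue asymptotics plus (\ref{eq1.110}) and $w\leq 1/\sigma$ force $\sigma\geq\sigma(1+\varepsilon_0/2)$, a contradiction; without (\ref{eq1.110}) there is simply no bound on $u_{nn}$ at $\partial\Omega$ (this is where curvature quotients versus $H_k^{1/k}$ genuinely differ). Also note that the mixed normal--tangential estimate is not a routine barrier step: it requires Lemma~\ref{lem1.10}, the modified operator $L$ of (\ref{eq5.90}), and the Ivochkina--Lin--Trudinger-type Proposition~\ref{prop5.2}, all fed by the sharp bound $w\leq 1/\sigma$; and the boundary estimate obtained this way is $u|D^2u|\leq C$ independent of $\e$, the $C/\e^2$ arising only in the global step via the $\kappa_{\max}$ maximum principle of \cite{GSZ08}.
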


   We shall use the continuity method to reduce the proof of Theorem~\ref{th1.2} 
to obtaining $C^2$ apriori estimates for admissible solutions. This approach
critically depends on the sharp global gradient estimate (\ref{eq.grad}), which is carried out in
section~\ref{sec3} under the assumption $\mathcal{H}_{\partial \Omega} \geq 0$. It implies that the linearized operator of
equation~\eqref{eq1.140} is invertible for all $\varepsilon \in (0, 1]$, a
crucial condition for the continuity method.
The centerpiece of this paper  is the boundary  second derivative estimate,
which we derive in section~\ref{sec5}.  Here we make use of Lemma~\ref{lem1.10} and a
careful analysis of the linearized operator to derive the mixed
normal-tangential estimate. Again  the sharp global gradient estimate (\ref{eq.grad}) enters into the proof in an essential way.
We then use assumption \eqref{eq1.110} to
establish a pure normal second derivative estimate.  In order to use Theorem \ref{th1.2} to obtain Theorem \ref{th1.1} (see the
end of section 4 for a more detailed explanation), we need a uniform in
$\e$ estimate for the hyperbolic principal curvatures of the $\mbox{graph
$u^{\e}$}$. Therefore in section~\ref{sec6} we prove a maximum principle
for the maximal hyperbolic principal curvature using a method derived in
our earlier paper \cite{GSZ08}.
It is here that we have had to restrict the allowable range of
$\sigma \in (0,1)$.  Otherwise our approach is completely general and we
expect Theorem~\ref{th1.1} is valid for all  $\sigma \in (0,1)$.
In section~\ref{sec2} 
we summarize the basic information about vertical graphs and the
linearized operator that we will need in the sequel and in section \ref{sec3} we review
some important barrier arguments using equidistant sphere solutions.

\bigskip

\section{Vertical graphs and the Linearized operator}
\label{sec2}
\setcounter{equation}{0}

Suppose $\Sigma$ is locally represented as the graph of a function
$u \in C^2 (\Omega)$, $u > 0$, in a domain $\Omega \subset \bfR^n$:
\[ \Sigma = \{(x, u (x)) \in \bfR^{n+1}: \; x \in \Omega\}. \]
oriented  by the upward (Euclidean) unit normal vector
field  $\nu$ to $\Sigma$:
\[ \nu = \Big(\frac{-Du}{w}, \frac{1}{w}\Big), \;\; w=\sqrt{1+|Du|^2}. \]
The Euclidean metric 
and second fundamental form of $\Sigma$ are given respectively by
\[ g^e_{ij} = \delta_{ij} + u_i u_j, \,\, h^e_{ij} = \frac{u_{ij}}{w}. \]
According to \cite{CNS4}, the Euclidean principal curvatures
$\kappa^e [\Sigma]$ are the eigenvalues of the symmetric matrix $A^e
[u] = \{a^e_{ij}\}$:
\begin{equation}
\label{eq2.10}
 a^e_{ij} := \frac{1}{w} \gamma^{ik} u_{kl} \gamma^{lj},
\end{equation}
where
\begin{equation}
\label{eq2.101}
 \gamma^{ij} = \delta_{ij} - \frac{u_i u_j}{w (1 + w)}.
\end{equation}
 Note that the matrix $\{\gamma^{ij}\}$ is invertible with inverse
\begin{equation}
\label{eq2.102}
 \gamma_{ij} = \delta_{ij} + \frac{u_i u_j}{1 + w}
\end{equation}
which is the square root of $\{g^e_{ij}\}$, i.e., $\gamma_{ik}
\gamma_{kj} = g^e_{ij}$. By \eqref{eq1.30} the hyperbolic principal
curvatures $\kappa [u]$ of $\Sigma$ are the eigenvalues of the
matrix $A [u] = \{a_{ij} [u]\}$:
\begin{equation}
\label{eq2.20}
 a_{ij} [u] :=
\frac{1}{w} \Big(\delta_{ij}+ u\gamma^{ik} u_{kl} \gamma^{lj}\Big).
\end{equation}

Let $\mathcal{S}$ be the vector space of $n \times n$ symmetric matrices
and
\[ \mathcal{S}_K = \{A \in \mathcal{S}: \lambda (A) \in K\}, \]
where $\lambda (A) = (\lambda_1, \dots, \lambda_n)$ denotes the
eigenvalues of $A$. Define a function $F$ by
\begin{equation}
\label{eq2.30}
F (A) = f (\lambda (A)), \;\; A \in \mathcal{S}_K.
\end{equation}
Throughout the paper we denote \begin{equation}
\label{eq2.40}
F^{ij} (A) = \frac{\partial F}{\partial a_{ij}} (A), \;\;
  F^{ij, kl} (A) = \frac{\partial^2 F}{\partial a_{ij} \partial a_{kl}} (A).
\end{equation}
The matrix $\{F^{ij} (A)\}$, which is symmetric, has eigenvalues
$f_1, \ldots, f_n$, and therefore  is positive definite for
$A \in \mathcal{S}_K$ if $f$ satisfies (\ref{eq1.40}),
 while (\ref{eq1.50}) implies that $F$ is concave for
$A \in \mathcal{S}_K$ (see \cite{CNS3}), that is
\begin{equation}
\label{eq2.50}
 F^{ij, kl} (A) \xi_{ij} \xi_{kl} \leq 0,
     \;\; \forall \; \{\xi_{ij}\} \in \mathcal{S}, \; A \in \mathcal{S}_K.
\end{equation}
We have
\begin{equation}
\label{eq2.60}
 F^{ij} (A) a_{ij} = \sum f_i (\lambda (A)) \lambda_i,
\end{equation}
\begin{equation}
\label{eq2.70}
F^{ij} (A) a_{ik} a_{jk} = \sum f_i (\lambda (A)) \lambda_i^2.
\end{equation}

The function $G$ in equation (\ref{eq1.140}) is determined by
\begin{equation}
\label{eq2.80}
G (D^2 u, Du, u) = F (A [u])
\end{equation}
where $A [u] = \{a_{ij} [u]\}$ is given by (\ref{eq2.20}).
 Let
  \be \label{eq2.90}
\mathcal{L}=G^{st} \partial_s \partial_t + G^s \partial_s +G_u \ee
be the linearized operator of $G$ at $u$, where
\be \label{eq2.95}
G^{st} = \frac{\partial G}{\partial u_{st}}, \,
 G^s=\frac{\partial G}{\partial u_{s}}, \,
 G_u=\frac{\partial G}{\partial u}. 
 \ee
We shall give the exact formula for $G^s$ later but note that
\begin{equation}
\label{eq2.100}
\begin{aligned}
\,&G^{st}=\frac{u}wF^{ij}\gamma^{is}\gamma^{jt}\\
\,&G^{st}u_{st}=uG_u 
 =F^{ij}a_{ij}-\frac1w \sum F^{ii}
\end{aligned}
\ee
and
\begin{equation}
\label{eq2.110} G^{pq, st} := \frac{\partial^2 G}{\partial u_{pq}
\partial u_{st}} =\frac{u^2}{w^2}  F^{ij,kl} \gamma^{is}\gamma^{tj}
\gamma^{kp}\gamma^{ql}
\end{equation}
where $F^{ij} = F^{ij} (A[u])$, etc. It follows that, under condition (\ref{eq1.40}),
equation~(\ref{eq1.140}) is elliptic
 for $u$ if  $A[u] \in \mathcal{S}_K$, while (\ref{eq1.50})
implies that $G(D^2 u, Du, u)$ is concave with
respect to $D^2 u$.

For later use, the eigenvalues of $\{G^{ij}\}$ and $\{F^{ij}\}$ (which are the $f_i$) are related
by
\begin{lemma}\label{lem2.1}
Let  $0<\mu_1  \leq \ldots  \leq \mu_n$ and $0<f_1 \leq \ldots \leq
f_n$ denote the eigenvalues of $\{G^{ij}\}$ and $\{F^{ij}\}$
respectively. Then \be \label{gs5-F85} w\mu_k \leq u f_k \leq w^3
\mu_k, \;\; 1\leq k \leq n. \ee
\end{lemma}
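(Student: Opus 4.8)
The plan is to relate the two matrices $\{G^{ij}\}$ and $\{F^{ij}\}$ directly through the formula $G^{ij}=\frac{u}{w}F^{kl}\gamma^{ik}\gamma^{lj}$ from \eqref{eq2.100}, and to exploit the fact that $\{\gamma^{ij}\}$ is a symmetric positive definite matrix whose eigenvalues are explicitly controlled. First I would diagonalize: the eigenvalues of $\{\gamma_{ij}\} = \delta_{ij}+\frac{u_iu_j}{1+w}$ are $1$ (with multiplicity $n-1$, on the orthogonal complement of $Du$) and $1+\frac{|Du|^2}{1+w} = w$ (in the direction of $Du$), since $|Du|^2 = w^2-1 = (w-1)(w+1)$. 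Hence the eigenvalues of $\{\gamma^{ij}\}$ are $1$ and $\frac1w$, so that as quadratic forms
\[
\frac1w\, I \;\leq\; \{\gamma^{ij}\} \;\leq\; I.
\]
Equivalently, writing $\Gamma=\{\gamma^{ij}\}$, we have $\frac{1}{w^2}I \le \Gamma^2 \le I$ in the sense of symmetric matrices.

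Next I would use this to sandwich $\{G^{ij}\}$ between multiples of $\{F^{ij}\}$. Since $G^{ij}=\frac{u}{w}(\Gamma F \Gamma)_{ij}$ with $F=\{F^{kl}\}$ positive definite (by \eqref{eq1.40}), for any vector $\xi$ we have
\[
\xi^{T} G \xi = \frac{u}{w}\,(\Gamma\xi)^{T} F (\Gamma\xi),
\]
and bounding $(\Gamma\xi)^T F(\Gamma\xi)$ above and below by the extreme eigenvalues of $\Gamma$ times $\eta^T F\eta$ is not quite immediate because $\Gamma$ and $F$ need not commute. The clean way is to pass through the eigenvalues of $F$: since $f_1 I \le F \le f_n I$ is too lossy, instead I use that for any symmetric positive definite $\Gamma$ with $\frac1w I\le\Gamma\le I$, one has $\Gamma F\Gamma \le \Gamma(f_nI)\Gamma\le f_n I$ — no, this also loses the per-index information. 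The correct approach is the variational (min–max) characterization of eigenvalues: by the Courant–Fischer theorem, $\mu_k = \min_{\dim V = k}\max_{0\neq\xi\in V}\frac{\xi^TG\xi}{\xi^T\xi}$, and substituting $\xi^TG\xi = \frac{u}{w}(\Gamma\xi)^TF(\Gamma\xi)$ together with $\frac{1}{w^2}\xi^T\xi \le (\Gamma\xi)^T(\Gamma\xi)\le \xi^T\xi$ and the analogous min–max for $f_k$ gives, after changing variables $\eta=\Gamma\xi$ (a linear isomorphism) in the inner max and controlling the distortion $\xi^T\xi$ versus $\eta^T\eta$,
\[
\frac{u}{w}\cdot\frac{1}{w^2}\,f_k \;\le\; \mu_k \;\le\; \frac{u}{w}\,f_k,
\]
which upon rearranging is exactly $w\mu_k \le uf_k \le w^3\mu_k$.

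The main obstacle is precisely this noncommutativity point: making the min–max comparison rigorous requires care because the change of variables $\eta=\Gamma\xi$ maps a $k$-dimensional subspace to a $k$-dimensional subspace but distorts the normalization $\xi^T\xi$, and one must check that the distortion factor lies in $[\frac1{w^2},1]$ and is applied in the direction that preserves the inequality. A cleaner alternative I would actually adopt: work with $\widetilde G := \frac{w}{u}G = \Gamma F\Gamma$, note $\widetilde G$ and $\Gamma^2 F$ are similar (conjugate by $\Gamma^{\pm 1}$, hence share eigenvalues with — wait, $\Gamma F\Gamma = \Gamma(\Gamma^{-1}\cdot\Gamma F\Gamma) $; more usefully $\Gamma F\Gamma$ is similar to $F\Gamma^2$ via conjugation by $\Gamma$). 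Then the eigenvalues of $F\Gamma^2$, with $\frac1{w^2}I\le\Gamma^2\le I$ and $F>0$, satisfy $\lambda_k(F\Gamma^2)\le\lambda_k(F\cdot I)=f_k$ and $\lambda_k(F\Gamma^2)\ge\lambda_k(F\cdot\frac1{w^2}I)=\frac1{w^2}f_k$ by the standard monotonicity $\lambda_k(PQ)$ is monotone in the positive factor $Q$ for $P>0$ (seen by writing $F\Gamma^2$ as similar to $F^{1/2}\Gamma^2F^{1/2}$, a genuinely symmetric matrix to which Weyl/Courant–Fischer monotonicity applies directly). Tracking the scalar $\frac{u}{w}$ then yields \eqref{gs5-F85}. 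I would present this symmetrized version, since it reduces everything to the textbook fact that $A\le B$ implies $\lambda_k(A)\le\lambda_k(B)$ for symmetric matrices.
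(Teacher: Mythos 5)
Your proposal is correct, and the version you ultimately adopt is packaged differently from the paper's proof, though both rest on the same two facts: $\tfrac{w}{u}G=\Gamma F\Gamma$ with $\Gamma=\{\gamma^{ij}\}$, and the eigenvalues of $\Gamma$ lying in $[\tfrac1w,1]$. The paper argues exactly along the lines of your first sketch: it writes $uF^{ij}\xi_i\xi_j = wG^{kl}\xi'_k\xi'_l$ with $\xi'=\{\gamma_{ij}\}\xi$, notes $|\xi|^2\le|\xi'|^2\le w^2|\xi|^2$, and invokes the minimax characterization of eigenvalues. Your worry about this route is unfounded: since $\xi\mapsto\xi'$ is a linear isomorphism it bijects $k$-dimensional subspaces, and the Rayleigh quotients $\frac{uF[\xi]}{|\xi|^2}=\frac{wG[\xi']}{|\xi|^2}$ are squeezed between $\frac{wG[\xi']}{|\xi'|^2}$ and $\frac{w^3G[\xi']}{|\xi'|^2}$ directly, so Courant--Fischer gives $w\mu_k\le uf_k\le w^3\mu_k$ with no further bookkeeping. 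Your alternative — observing that $\Gamma F\Gamma$ is similar to $F^{1/2}\Gamma^2F^{1/2}$, sandwiching $\tfrac1{w^2}F\le F^{1/2}\Gamma^2F^{1/2}\le F$ from $\tfrac1{w^2}I\le\Gamma^2\le I$, and applying monotonicity of ordered eigenvalues under the Loewner order — is also correct and correctly sidesteps the genuine pitfall that $\Gamma F\Gamma\le F$ need not hold when $\Gamma$ and $F$ do not commute; it buys a reduction to a single textbook monotonicity fact at the cost of introducing $F^{1/2}$ and a similarity argument, whereas the paper's minimax computation is a three-line direct estimate. Your explicit computation of the eigenvalues of $\gamma_{ij}$ ($1$ with multiplicity $n-1$ and $w$ along $Du$) is a clean substitute for the paper's identity $|\xi'|^2=|\xi|^2+|\xi\cdot Du|^2$.
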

\begin{proof}
For any $\xi=(\xi_1,\ldots, \xi_n) \in \bfR^n$ we have from
(\ref{eq2.100})
\[u F^{ij}\xi_i \xi_j=wG^{kl}\gamma_{ik}\gamma_{lj}\xi_i \xi_j=wG^{kl}\xi'_k \xi'_l\]
where
\[\xi'_i=\gamma_{ik }\xi_k=\xi_i+ \frac{(\xi \cdot Du)u_i}{1+w}~.\]
Note that
\[ |\xi|^2 \leq |\xi'|^2=|\xi|^2+|\xi \cdot Du|^2 \leq w^2 |\xi|^2 \]
where $\xi'=(\xi'_1, \ldots,\xi'_n)$.
Since both $\{G^{ij}\}$ and $\{F^{ij}\}$  are positive,  (\ref{gs5-F85}) follows from
the minimax characterization of eigenvalues.
\end{proof}

\section{
Height estimates and the asymptotic angle condition} \label{sec3}

In this section let $\Sigma$ be a hypersurface in $\bfH^{n+1}$ with
$\partial \Sigma \subset P(\e):=\{x_{n+1}=\e\}$ so $\Sigma$
separates $\{x_{n+1} \geq \e\}$ into an inside (bounded) region and an
outside (unbounded) one.  Let $\Omega$ be the region in
$\bfR^{n}\times\{0\}$ such that its vertical lift $\Omega^{\e}$ to
$P(\e)$ is bounded by $\partial \Sigma$ (and $\bfR^n \setminus \Omega$
is connected and unbounded). It is allowable that $\Omega$ has several connected
components. Suppose $\kappa [\Sigma] \in K$ and
$f(\kappa)=\sigma \in (0,1)$ with respect to the outer normal.

 Let $B_1=B_R (a)$ be a ball of radius $R$ centered at $a = (a', -\sigma R) \in
\bfR^{n+1}$ where $\sigma \in (0,1)$ and
 $S_1 = \partial B_1 \cap \bfH^{n+1}$. Then $\kappa_i [S_1] = \sigma$ for all
$1 \leq i \leq n$ with respect to its outward normal. Similarly,  let $B_2=B_R (b)$ be a ball of radius $R$ centered at $b = (b', \sigma R) \in
\bfR^{n+1}$ with $S_2 = \partial B_2 \cap \bfH^{n+1}$. Then $\kappa_i [S_2] = \sigma$ for all
$1 \leq i \leq n$ with respect to its inward normal. 

These so called equidistant spheres
 serve as useful barriers. 
 
\begin{lemma}
\label{lem3.0}
\be \label{eq3.05} 
\begin{aligned}
&(i) \,\,\,\, \Sigma \cap \{x_{n+1}<\e\}=\emptyset\\
&(ii) \,\,\mbox{If}\,\, \partial \Sigma \subset B_1, \,\,\mbox{then}\,\, \Sigma \subset B_1~.\\
&(iii)\,\, \mbox{If}\,\,  B_1\cap P(\e)\subset \Omega^{\e}, \,\,\mbox{then}\,\, B_1 \cap \Sigma=\emptyset~.\\
&(iv)\,\,\mbox{If }\,\,B_2 \cap \Omega^{\e}=\emptyset, \,\,\mbox{then}\,\, B_2\cap \Sigma=\emptyset~.
\end{aligned}
\ee
\end{lemma}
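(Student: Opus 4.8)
The plan is to prove each of (i)--(iv) by the geometric maximum principle (tangency principle) for the curvature equation $f(\kappa)=\sigma$, comparing $\Sigma$ against the equidistant spheres $S_1$, $S_2$ and the horosphere $P(\e)$, all of which satisfy the same equation with the same constant $\sigma$. The key analytic fact is that for an admissible hypersurface the equation is elliptic (by \eqref{eq1.40}), so if two solutions are tangent at an interior point with one lying (locally) on one side of the other, and their normals agree there, they must coincide near that point; a connectedness argument then propagates the coincidence globally. For statements comparing $\Sigma$ with a region rather than a surface, the horospheres $\{x_{n+1}=c\}$ (which have all hyperbolic principal curvatures equal to $1>\sigma$) serve as the intermediate family.

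For (i), I would argue that if $\Sigma$ dips below $P(\e)$, then since $\partial\Sigma\subset P(\e)$ and $\Sigma$ is compact (or proper), the function $x_{n+1}$ attains an interior minimum $c<\e$ on $\Sigma$. At that point $\Sigma$ is tangent from above to the horosphere $\{x_{n+1}=c\}$, whose principal curvatures with respect to the upward normal are all $1$. Since $\Sigma$ lies above this horosphere near the contact point, the comparison of second fundamental forms gives $\kappa_i[\Sigma]\ge 1$ for all $i$ there (with respect to the appropriate normal), whence $f(\kappa[\Sigma])\ge f({\bf 1})=1>\sigma$ by \eqref{eq1.40} and \eqref{eq1.70}, a contradiction. (One must be careful about orientation and the sign conventions in \eqref{eq1.30}, but the horosphere is the natural barrier since letting $R\to\infty$ in the equidistant sphere construction degenerates $S_1$ or $S_2$ to a horosphere.) For (ii), I would slide the ball $B_1$: start with a large ball containing $\Sigma$ and shrink/translate within the family of radius-$R$ balls centered on the hyperplane $\{x_{n+1}=-\sigma R\}$ (equivalently use the one-parameter family of equidistant spheres $S_1$); at a first interior touching point of $\Sigma$ with $S_1$, both have curvature $\sigma$ and $\Sigma$ is on the inside, so the tangency principle forces $\Sigma\subset B_1$. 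Statements (iii) and (iv) are the analogous sliding arguments: in (iii), if $B_1\cap P(\e)\subset\Omega^\e$ then $S_1$ can be moved (within its family) from a position disjoint from $\Sigma$ toward $B_1$; a first contact point would again violate the maximum principle because of the matching curvature $\sigma$ and the side condition, so $B_1\cap\Sigma=\emptyset$; (iv) is the mirror image using $S_2$ and the inward-normal convention, with $B_2\cap\Omega^\e=\emptyset$ guaranteeing there is no obstruction to sliding $S_2$ away from $\Sigma$.

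The main obstacle I anticipate is bookkeeping the orientations and the geometry of the half-space model carefully enough that the maximum principle is applied with the curvatures compared on the correct side: the equidistant spheres $S_1$ and $S_2$ have $\kappa_i=\sigma$ with respect to \emph{opposite} normals, and the region $\Omega$ may be disconnected, so one must check that the sliding family of spheres actually reaches a first-contact configuration with $\Sigma$ rather than escaping to infinity or slipping through a "hole." The properness/completeness of $\Sigma$ and the hypothesis that $\bfR^n\setminus\Omega$ is connected and unbounded are exactly what rule out such pathologies, and I would invoke (i) first so that in (ii)--(iv) one already knows $\Sigma$ lives in $\{x_{n+1}\ge\e\}$, which confines the relevant part of the sliding family to a compact region and makes the first-contact argument rigorous.
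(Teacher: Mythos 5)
Your proposal is correct and follows essentially the same route as the paper: (i) by tangency with a horosphere at an interior minimum of the height function, and (ii)--(iv) by a continuous family of equidistant-sphere barriers (the paper realizes your ``sliding'' as homothetic dilations from $(a',0)$ and $(b',0)$, which are hyperbolic isometries) together with a first-contact maximum principle, using the hypotheses $B_1\cap P(\e)\subset\Omega^{\e}$ and $B_2\cap\Omega^{\e}=\emptyset$ and part (i) to rule out boundary contact, and keeping track of the opposite normal conventions for $S_1$ and $S_2$ exactly as you indicate.
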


\begin{proof} For (i) let $c=\min_{x\in \Sigma}x_{n+1}$ and suppose $0<c<\e$. Then the horosphere
$P(c)$ satisfies $f(\kappa)=1$ with respect to the upward normal, lies below $\Sigma$  and has an
interior contact violating the maximum principle. Thus $c=\e$. For (ii),(iii), (iv) we perform homothetic dilations from $(a^{\prime},0)$ and $(b^{\prime},0)$ respectively
which are hyperbolic isometries and use the maximum principle. For (ii), expand $B_1$ continuously
until it contains $\Sigma$ and then reverse the process. Since the curvatures of $\Sigma$ and $S_1$ are calculated with respect to their outward normals and both hypersurfaces satisfy $f(\kappa)=\sigma$,
there cannot be a first contact. For (iii) and (iv) we shrink
$B_1$ and $B_2$ until they are respectively inside and outside $\Sigma$. When we expand $B_1$
there cannot be a first contact as above. Now shrink $B_2$ until it lies below $P(\e)$ and so is disjoint
(outside) from  $\Sigma$. Now reverse the process and suppose there is a first interior contact. Then the outward normal to $\Sigma$ at this contact point is the inward normal to $S_2$.  Since the curvatures of   $S_2$ are calculated with respect to its inner normal and it satisfies $f(\kappa)=\sigma$, this contradicts the maximum principle.
\end{proof}
\begin{lemma}
\label{lem3.1}
Suppose $f$ satisfies (\ref{eq1.40}), (\ref{eq1.60}) and (\ref{eq1.80}).
 Assume that $\partial \Sigma \in C^2$ and let $u$ denote the height function of $\Sigma$.
Then for $\e > 0$ sufficiently small,
\begin{equation}
\label{eq3.20}
 - \frac{\e \sqrt{1-\sigma^2}}{r_2}
   - \frac{\e^2 (1+\sigma)}{r_2^2}
   < \nu^{n+1}-\sigma
        <  \frac{\e \sqrt{1-\sigma^2}}{r_1} +
        \frac{\e^2 (1-\sigma)}{r_1^2}
\;\;\; \mbox{on $\partial \Sigma$}
\end{equation}
where $r_2$ and $r_1$ are
the maximal radii of exterior and interior spheres to
$\partial \Omega$, respectively.
In particular,  $\nu^{n+1} \rightarrow \sigma$
on $\partial \Sigma$ as $\e \rightarrow 0$.
\end{lemma}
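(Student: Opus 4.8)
\textbf{Proof proposal for Lemma~\ref{lem3.1}.}
The plan is to use the equidistant spheres $S_1$ and $S_2$ from Lemma~\ref{lem3.0} as barriers at a boundary point and then read off the bound on $\nu^{n+1}$ from the geometry of the osculating spheres to $\partial\Omega$. Fix a point $x_0 \in \partial\Sigma$, so $x_0$ lies in the plane $P(\e)$ and projects to a point of $\partial\Omega$. By definition of $r_1$, there is an interior ball to $\partial\Omega$ of radius $r_1$ tangent at the projected point; lift it to $P(\e)$ and use it to position an equidistant sphere $S_1 = \partial B_R(a) \cap \bfH^{n+1}$ with $a = (a', -\sigma R)$ whose boundary circle at height $\e$ passes through $x_0$ and whose disk $B_1 \cap P(\e)$ approximates that interior ball. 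The size $R$ is pinned down by requiring $B_1 \cap P(\e)$ to have the right radius: since the sphere $\partial B_R(a)$ meets $\{x_{n+1} = \e\}$ in a sphere of radius $\sqrt{R^2 - (\e + \sigma R)^2}$, one chooses $R$ (for $\e$ small, $R$ is close to $r_1/\sqrt{1-\sigma^2}$) so that this equals essentially $r_1$, and arranges $B_1 \cap P(\e) \subset \Omega^\e$ near $x_0$. Then Lemma~\ref{lem3.0}(iii) (or the containment statement (ii) applied after a homothety) forces $\Sigma$ to stay on the correct side of $S_1$ near $x_0$, so $S_1$ is an upper barrier for $\Sigma$ at $x_0$ and the normals satisfy $\nu^{n+1}[\Sigma](x_0) \le \nu^{n+1}[S_1](x_0)$.

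Next I would compute $\nu^{n+1}[S_1]$ at the contact point explicitly. On $S_1 = \partial B_R(a) \cap \bfH^{n+1}$ the Euclidean upward unit normal at a point $x$ is $(x - a)/R$, so $\nu^{n+1} = (x_{n+1} - a_{n+1})/R = (x_{n+1} + \sigma R)/R = \sigma + x_{n+1}/R$. At the contact point $x_{n+1} = \e$, hence $\nu^{n+1}[S_1] = \sigma + \e/R$. Substituting the value of $R$ determined above — expanding $1/R$ in $\e$, with leading term $\sqrt{1-\sigma^2}/r_1$ and the next correction of order $\e/r_1^2$ with the stated constant $(1-\sigma)$ — yields the upper bound $\nu^{n+1} - \sigma < \e\sqrt{1-\sigma^2}/r_1 + \e^2(1-\sigma)/r_1^2$. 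Symmetrically, using the exterior ball of radius $r_2$ at the projected point, lift it and inscribe an equidistant sphere $S_2 = \partial B_R(b) \cap \bfH^{n+1}$ with $b = (b', \sigma R)$ with $B_2 \cap \Omega^\e = \emptyset$ near $x_0$; Lemma~\ref{lem3.0}(iv) makes $S_2$ a lower barrier, and the same computation gives $\nu^{n+1}[S_2] = \sigma - \e/R$ at the contact point, with $1/R$ now having leading term $\sqrt{1-\sigma^2}/r_2$ and correction constant $(1+\sigma)$, producing the lower bound. Since these bounds hold at an arbitrary $x_0 \in \partial\Sigma$, and the constants $r_1, r_2$ can be taken uniform over $\partial\Omega$ by compactness and smoothness, \eqref{eq3.20} follows, and letting $\e \to 0$ gives $\nu^{n+1} \to \sigma$ on $\partial\Sigma$.

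The main obstacle I expect is the bookkeeping in choosing $R$: one must verify that the equidistant sphere can actually be placed so that its cross-section at height $\e$ both passes through $x_0$ and lies inside (resp.\ outside) $\Omega^\e$ in a neighborhood of $x_0$, using only the interior/exterior sphere condition for $\partial\Omega$ with radii $r_1, r_2$. Because $\partial\Omega$ lives at height $0$ but the barriers are anchored at height $\e$, there is a genuine $O(\e)$ mismatch between the Euclidean ball tangent to $\partial\Omega$ and the cross-section $B_i \cap P(\e)$; controlling this discrepancy is exactly what produces the second-order terms $\e^2(1\mp\sigma)/r_i^2$ and fixes their precise coefficients. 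Getting those coefficients right requires care: one expands $\sqrt{R^2 - (\e + \sigma R)^2} = r_i + O(\e)$ to solve for $R$, then expands $\nu^{n+1} = \sigma + \e/R$ to second order in $\e$, and must check the sign of the correction (it differs between $S_1$ and $S_2$ because the centers sit at $\mp\sigma R$). Everything else — the barrier comparisons and the maximum principle input — is already packaged in Lemma~\ref{lem3.0}, so the proof is essentially this explicit sphere computation together with a uniform choice of $r_1, r_2$.
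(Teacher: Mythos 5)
Your proposal is correct and follows essentially the same route as the paper: position the two equidistant-sphere barriers $B_1,B_2$ through $(x_0,\e)$ so that their cross-sections at height $\e$ are the interior/exterior tangent balls of radii $r_1,r_2$ to $\partial\Omega^\e$, invoke Lemma~\ref{lem3.0}(iii)--(iv), read off $\nu^{n+1}=\sigma\pm\e/R$ from the sphere normals at the contact point, and bound $1/R$. The paper just removes your ``equals essentially $r_1$'' fudge by fixing $R_1,R_2$ exactly via $R_1^2=r_1^2+(\sigma R_1+\e)^2$ and $R_2^2=r_2^2+(\sigma R_2-\e)^2$ (since $\partial\Omega^\e$ is an exact vertical translate of $\partial\Omega$ there is no $O(\e)$ mismatch to control), and then estimates $1/R_i$ by an elementary inequality to obtain precisely the coefficients $\sqrt{1-\sigma^2}/r_i$ and $(1\mp\sigma)/r_i^2$ in \eqref{eq3.20}.
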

\begin{proof} Assume first $r_2 <\infty$. Fix a point $x_0 \in \partial \Omega$ and let $e_1$ be the outward pointing unit normal to $\partial \Omega$ at $x_0$. Let $B_1, B_2$  be  balls in $R^{n+1}$ with centers
$a_1=(x_0-r_1 e_1, -R_1 \sigma,\, a_2=(x_0+r_2 e_1, R_2 \sigma)$ and radii $R_1, R_2$ respectively
satisfying
\be \label{eq3.30}
R_1^2=r_1^2+(R_1 \sigma+\e)^2,\, R_2^2=r_2^2+(R_2 \sigma-\e)^2~.
\ee
Then $B_1 \cap P(\e)$ is an n-ball of radius $r_1$ internally tangent to $\partial \Omega^{\e}$ at $x_0$ while
$B_2 \cap P(\e)$ is an n-ball of radius $r_2$ externally tangent to $\partial \Omega^{\e}$ at $x_0$ 
By Lemma \ref{lem3.0} (iii) and (iv), $B_i \cap \Sigma=\emptyset, \, i=1,2$. Hence,
\[ -\frac{u-\sigma R_2}{R_2}<\nu^{n+1}<\frac{u+\sigma R_1}{R_1} \,\,\mbox{at $x_0$}~.\]
That is, 
\be \label{eq3.31}
-\frac{\e}{R_2}<\nu^{n+1}-\sigma<\frac{\e}{R_1}\,\,\mbox{at $x_0$}~.
\ee
From (\ref{eq3.30}), 
\[\frac1{R_1}=\frac{\sqrt{(1-\sigma^2)r_1^2+\e^2}-\e \sigma}{r_1^2+\e^2}<\frac{\sqrt{1-\sigma^2}}{r_1}
+\frac{\e(1-\sigma)}{r_1^2}~,\]
and 
\[\frac1{R_2}=\frac{\sqrt{(1-\sigma^2)r_2^2+\e^2}+\e \sigma}{r_2^2+\e^2}<\frac{\sqrt{1-\sigma^2}}{r_2}
+\frac{\e(1+\sigma)}{r_2^2}~,\]
These estimates and \eqref{eq3.31} give \eqref{eq3.20}, completing the proof of the lemma.
\end{proof}

\medskip

\section{The approximating problems and the continuity method}
\label{sec4}

We study the approximating  Dirichlet problem
\begin{equation}
\label{eq4.10}
\begin{aligned}
G(D^2u,Du,u)&=\sigma \hspace{.25in} \mbox{in $\Omega$} \\
u&=\e \hspace{.25in} \mbox{on $\partial \Omega$}
\end{aligned}
\end{equation}
using the continuity method.

Consider for $0\leq t \leq 1$ the family of Dirichlet problems
\begin{equation} \label{eq4.20}
\begin{aligned}
G(D^2 u^t, Du^t, u^t) &=\sigma^t:= t\sigma +(1-t) \hspace{.25in} 
 \mbox{in $ \Omega$,}\\
u^t &= \e \hspace{.25in} \mbox{on $ \partial \Omega$,}\\
u^0 &\equiv \e.
\end{aligned}
\ee

For $\Omega $ a $C^{2+\alpha}$ domain, we find (starting from $u^0
\equiv \e$) a smooth family of solutions $u^t$, $0\leq t \leq 2t_0$
by the implicit function theorem since $G_u|_{u^0} \equiv 0$. We
shall show in a moment that these solutions are unique. By elliptic
regularity it is now well understood that if we can find uniform
estimates in $C^2$  for $0 < t_0 \leq t \leq 1$ then we can solve
(\ref{eq4.10}).

By Lemma~\ref{lem3.0},  we obtain the $C^0$ estimate 
 \be
 \label{eq4.30}
 \e \leq u^t \leq C \,\,\mbox{in $\Omega$}.
 \ee

\subsection{The $C^1$ estimate }

The following proposition shows that we have uniform $C^1$ estimates
in the continuity method and that the linearized operator
$\mathcal{L}$ satisfies the maximum principle.

\begin{proposition}
\label{prop4.1}
 Let $u^t \in C^{2+\alpha}(\overline{\Omega})$ be a family of admissible 
solutions of (\ref{eq4.20}) for $0\leq t \leq t^*$. Suppose
$\mathcal{H}_{\partial \Omega} \geq 0$. Then $G_u|_{u^t} \leq 0$ so
we have uniqueness.
 Hence $w^t$ assumes its maximum on $\partial \Omega$ and
  $w^t\leq \frac{1 }{\sigma^t}$ on $\ol{\Omega}$ for all $0 \leq t \leq t^*$.
\end{proposition}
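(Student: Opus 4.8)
The plan is to exploit the two identities in \eqref{eq2.100}, namely $G^{st}u_{st}=uG_u=F^{ij}a_{ij}-\frac1w\sum F^{ii}$, together with the homogeneity of $f$ and the structure inequalities \eqref{eq1.90}--\eqref{eq1.100}. Since $F$ is homogeneous of degree one, $F^{ij}a_{ij}=\sum f_i\lambda_i=F(A[u])=\sigma^t$ along a solution, while $\sum F^{ii}=\sum f_i=\sum f_i(\lambda[u])$. The trace identity for $A[u]$ from \eqref{eq2.20} gives $\sum a_{ii}=\frac1w\big(n+u\sum_{i,k,l}\gamma^{ik}u_{kl}\gamma^{li}\big)$, but more usefully the eigenvalue relation \eqref{eq1.30}, $\kappa_i=u\kappa^e_i+\frac1w$, shows $\sum\lambda_i=u\sum\kappa^e_i+\frac nw=uw\,\mathcal H^e_\Sigma\cdot(\text{factor})+\frac nw$ — the point being that $uG_u=\sigma^t-\frac1w\sum f_i$, and by \eqref{eq1.100} we have $\sum f_i\ge 1$, while $w\ge 1$, so the sign of $uG_u$ hinges on whether $\sigma^t\le\frac1w\sum f_i$. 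This is not yet conclusive, so the real argument must bring in the boundary hypothesis $\mathcal H_{\partial\Omega}\ge 0$.

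\medskip

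First I would establish $G_u|_{u^t}\le 0$. Suppose not; then $G_u>0$ somewhere. I would look at an interior maximum point of $u^t$ (or argue via the structure of the equation): at such a point $Du^t=0$, so $w=1$ and $a_{ij}=\delta_{ij}+u\,u_{ij}$ with $\{u_{ij}\}\le 0$, giving $\lambda_i=\kappa_i\le 1$; then $uG_u=F(A)-\sum f_i=\sigma^t-\sum f_i\le \sigma^t-1<0$ since $\sigma^t\in(0,1]$, a contradiction unless the maximum is on $\partial\Omega$. The cleaner route, and the one I expect the paper takes, is to use the global gradient bound: once $G_u\le 0$ is known, the operator $\mathcal L=G^{st}\partial_s\partial_t+G^s\partial_s+G_u$ has nonpositive zeroth-order coefficient, hence satisfies the maximum principle, and uniqueness of $u^t$ follows by the standard comparison argument (difference of two solutions satisfies a linear elliptic inequality $\mathcal L(u-\tilde u)=0$ with $G_u\le0$, so it cannot have a positive interior max or negative interior min, forcing $u\equiv\tilde u$ given equal boundary data).

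\medskip

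For the gradient estimate $w^t\le 1/\sigma^t$, the plan is to show $w^t$ has no interior maximum, so that $\max_{\ol\Omega}w^t$ is attained on $\partial\Omega$, and then to evaluate $w^t$ on $\partial\Omega$ using Lemma~\ref{lem3.1} (or a direct barrier). Concretely I would compute $\mathcal L$ applied to $w$ (or to $\log w$, or to $\nu^{n+1}=1/w$) along a solution. The standard computation: $\nu^{n+1}=1/w$ satisfies an equation of the form $G^{st}\partial_s\partial_t\nu^{n+1}+(\text{first order})=(\text{curvature terms})\cdot\nu^{n+1}$, and using $\kappa_i=u\kappa^e_i+\frac1w$ one rewrites everything so that the sign is controlled. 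The key algebraic fact one needs is that at an interior critical point of $w$, the relation $\sum f_i\kappa_i=\sigma$ combined with $\sum f_i\ge 1$ forces $\nu^{n+1}\ge\sigma$ there; pushing this through shows the minimum of $\nu^{n+1}$ over $\ol\Omega$ (equivalently the max of $w$) cannot occur in the interior strictly below $\sigma$, hence $w\le 1/\sigma^t$ once we check the boundary value. On $\partial\Omega$ we use $u=\e$ and the sign $\mathcal H_{\partial\Omega}\ge 0$: Lemma~\ref{lem3.1} gives $\nu^{n+1}\to\sigma$ as $\e\to0$ and, more precisely, the one-sided bound $\nu^{n+1}\ge\sigma-C\e$; but in fact the sharp statement $w\le 1/\sigma^t$ everywhere should follow from the maximum principle for the auxiliary function together with the barrier $B_2$ from Lemma~\ref{lem3.0}(iv), which at $x_0\in\partial\Omega$ gives exactly $\nu^{n+1}\ge\sigma\cdot\frac{R_2-\e/\sigma}{R_2}$, and letting the exterior sphere radius $r_2\to\infty$ (legitimate when $\mathcal H_{\partial\Omega}\ge 0$, since then a half-space barrier is admissible) yields $\nu^{n+1}\ge\sigma$ on $\partial\Omega$, i.e. $w^t\le1/\sigma^t$ there.

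\medskip

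The main obstacle I anticipate is the interior gradient computation: correctly differentiating the fully nonlinear operator $G$ twice, organizing the terms $F^{ij,kl}$ and $F^{ij}$ contributions, and exploiting concavity \eqref{eq2.50} plus the homogeneity identities so that the ``bad'' third-derivative terms cancel or have a favorable sign, leaving a differential inequality $\mathcal L(\eta)\ge 0$ (or $\le 0$) for the right auxiliary quantity $\eta=\eta(w,u)$. The role of $\mathcal H_{\partial\Omega}\ge 0$ is subtle: it is exactly what makes the exterior-sphere barrier degenerate to a totally geodesic/horospherical barrier, giving the \emph{sharp} constant $1/\sigma$ rather than $1/\sigma+O(\e)$. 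Getting that sharp constant — not merely a bound $C/\sigma$ — is the delicate point, and it is why the hypothesis on the mean curvature of the boundary enters in an essential way.
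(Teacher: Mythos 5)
Your proposal has two genuine gaps. First, the circularity you never resolve: you need $G_u\le 0$ to run the maximum principle, and you propose to obtain $G_u\le 0$ ``once the gradient bound is known,'' while the gradient bound is itself what the maximum principle is supposed to deliver. (Your interim attempt — evaluating $uG_u$ at an interior maximum of $u$ — only controls the sign of $G_u$ at that one point, so assuming $G_u>0$ ``somewhere'' produces no contradiction.) The paper breaks this loop using the family structure in $t$, which your argument never touches even though the statement is about all $0\le t\le t^*$: at $t=0$ one has $u^0\equiv\e$, $\kappa_i=1$, $uG_u\equiv 0$, and since $\frac{d}{dt}(\sigma^t-\frac1{w^t})\big|_{t=0}=\sigma-1<0$, one gets $uG_u<0$ strictly for small $t>0$; then the maximum principle is available, the gradient bound $w^t\le 1/\sigma^t$ is proved for those $t$, and because $uG_u\le\sigma^t-\frac1{w^t}\le 0$ (by \eqref{eq2.100} and \eqref{eq1.100}) the estimate is self-improving and propagates by continuity up to $t^*$. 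Without this bootstrap the claim ``$G_u\le 0$ so we have uniqueness'' is unproved.

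Second, your route to the sharp bound $w\le 1/\sigma^t$ on $\partial\Omega$ — letting the exterior tangent sphere radius $r_2\to\infty$ in Lemma~\ref{lem3.0}(iv)/Lemma~\ref{lem3.1} — requires an exterior half-space (in effect convexity of $\Omega$), which does not follow from $\mathcal H_{\partial\Omega}\ge 0$; with finite $r_2$ those barriers only give $\nu^{n+1}\ge\sigma-C\e$, not $\nu^{n+1}\ge\sigma$. The paper's mechanism is different and purely local at a boundary maximum point of $w$: since $u\equiv\e$ on $\partial\Omega$, the tangential second derivatives satisfy $\sum_{\alpha<n}u_{\alpha\alpha}=-(n-1)u_n\mathcal H_{\partial\Omega}\le 0$, the maximality of $w$ gives $u_{nn}\le 0$, and the inequality \eqref{eq1.90} (hyperbolic mean curvature of the graph $\ge\sigma^t$) then forces $\sigma^t-\frac1w\le 0$ at that point. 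Relatedly, your interior step (a heavy computation of $\mathcal L$ applied to $w$ or $\nu^{n+1}$, with the unsubstantiated claim that an interior critical point of $w$ forces $\nu^{n+1}\ge\sigma$) is unnecessary and not justified as stated: since $G$ has no explicit $x$-dependence, $\mathcal L u_k=0$ for every $k$, so once $G_u\le 0$ each directional derivative, and hence $w$, attains its maximum on $\partial\Omega$ — that is the whole interior argument.
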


\begin{proof}
 We (usually) suppress the $t$ dependence for convenience. By (\ref{eq2.100}) and
(\ref{eq1.100}), 
\[u G_u= \sigma^t -\frac{1}{w^t} \sum f_i \leq \sigma^t-\frac1{w^t}. \]
For $t=0,\, \sigma^0=1,~u^0\equiv \e,\,\kappa_i=1,~ f_i=\frac1n$ and
so $uG_u\equiv 0$. Note also that $\frac{d}{dt}(\sigma^t-\frac1{w^t)}|_{t=0}=\sigma-1<0$.
Hence for $t>0$ sufficiently small, $uG_u<0$ so the operator $\mathcal{L}$ given by (\ref{eq2.90})
satisfies the maximum principle. But
$\mathcal{L}u_k=0$  so each derivative $u_k$ achieves
its maximum on $\partial \Omega$. In particular, $w$  assumes its
maximum on $\partial \Omega$. Let $0\in \partial \Omega$ be a
point where $w$ assumes its maximum. Choose coordinates $(x_1,
\ldots , x_n)$ at $0$ with
$x_n$ the inner normal direction
for $\partial \Omega$.  Then at $0$,
 \[ u_{\alpha}=0, \; 1 \leq \alpha < n, \; u_n>0,~ ~ u_{nn}\leq 0,\]
 and
 \[ \sum_{\alpha<n} u_{\alpha \alpha}=-u_n (n-1) \mathcal{H}_{\partial \Omega}\leq 0. \]
Note that by (\ref{eq1.90}), the hyperbolic mean curvature of 
$\mbox{graph $(u)$} \geq \sigma$.
Therefore,
 \[ \frac{n}{\e}\Big(\sigma -\frac1{w}\Big)
  \leq \frac1{w} \Big(\sum_{\alpha<n} u_{\alpha \alpha}+ \frac{u_{nn}}{w^2}\Big)
  \leq -(n-1)\frac{u_n}{w} \mathcal{H}_{\partial \Omega} \leq 0.\]
Hence $\sigma -\frac1w\leq 0$ or $w\leq \frac{1}{\sigma}$.
Thus $G_u \leq 0 $ so $\mathcal{L}$ satisfies the maximum principle.
 Consequently, the same estimates must continue to  hold as we increase $t$ up to $t^*$.
 \end{proof}

In  Section~\ref{sec5}, we will make use of Proposition
\ref{prop4.1} to complete the proof of the $C^2$ estimates (see
Theorem~\ref{th5.1} and Corollary~\ref{cor5.1} ). Since the
linearized operator is invertible, we have unique smooth solvability
all the way to $t=1$ completing the proof of Theorem~\ref{th1.2}. 
Using the global maximum principle, Theorem~\ref{thA1} of
Section~\ref{sec6} and Theorem~\ref{th5.1}, we obtain uniform
estimates for the hyperbolic principal curvatures. Note also that by Lemma \ref{lem3.0} iii, we have
a  positive lower bound (uniform in  $\e$) on each compact subdomain of $\Omega$  for the solutions  $u^{\e}$ obtained in Theorem \ref{th1.2}. This allows us to obtain uniform $C^{2+\alpha}$ estimates for $u^{\e}$ on compact subdomains of $\Omega$ by the interior estimates of Evans-Krylov.  We can now  let $\e$ tend to zero to obtain Theorem~\ref{th1.1}.

\bigskip

\section{Boundary estimates for second derivatives}
\label{sec5}

In this section we establish boundary estimates for second derivatives
of admissible solutions to the Dirichlet problem~\eqref{eq4.20} for all 
$t_0 \leq t \leq 1$. Clearly it suffices to consider the case $t = 1$. 
Throughout this section let
$\Omega$ be a bounded smooth domain in $\bfR^n$ with 
$\mathcal{H}_{\partial \Omega} \geq 0$, 
and $u \in C^3 (\bar{\Omega})$ an admissible solution of the 
Dirichlet problem 
\begin{equation}
\label{eq5.10}
\left\{ \begin{aligned}
G (D^2 u, Du, u) & = \sigma, & \;\;  \mbox{on $\ol{\Omega}$}, \\
u & = \e,  & \;\;  \mbox{on $\partial \Omega$}
\end{aligned} \right.
\end{equation}
where $G$ is defined in (\ref{eq2.80}). 

\begin{theorem}
\label{th5.1}
Suppose that $f$ satisfies (\ref{eq1.40})-(\ref{eq1.80}) and
(\ref{eq1.110}). If $\e$ is sufficiently small,
\begin{equation}
\label{eq5.20}
u|D^2 u| \leq C
\;\;\; \mbox{on $\partial \Omega$}
\end{equation}
where $C$ is independent of
$\e$.
\end{theorem}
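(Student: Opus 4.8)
The plan is to establish the estimate \eqref{eq5.20} separately for the three types of boundary second derivatives at a fixed point $x_0 \in \partial\Omega$, working in a local frame where $x_n$ is the interior normal direction and $x_1,\dots,x_{n-1}$ are tangential. As usual the tangential-tangential estimate $|u_{\alpha\beta}(x_0)| \leq C/\e$ for $\alpha,\beta < n$ follows immediately from differentiating the boundary condition $u = \e$ twice along $\partial\Omega$, using the gradient bound $|Du| \leq \sqrt{1/\sigma^2 - 1}$ from Proposition~\ref{prop4.1}: one gets $u_{\alpha\beta} = -u_n \mathcal{H}_{\alpha\beta}$ on $\partial\Omega$ in suitable coordinates, where $\mathcal{H}_{\alpha\beta}$ is the second fundamental form of $\partial\Omega$. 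Since we want $u|D^2u|$ bounded and $u = \e$ here, this gives $u|u_{\alpha\beta}| \leq C\e$, which is far better than needed.

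The heart of the matter is the mixed normal-tangential estimate $|u_{\alpha n}(x_0)| \leq C/\e$. Here I would follow the now-classical barrier construction of Caffarelli--Nirenberg--Spruck adapted to this setting: apply the tangential derivative operator $\partial_\alpha$ (more precisely a suitable first-order operator $T = \partial_\alpha + \sum_\beta b_{\alpha\beta}(x_\beta \partial_n - x_n\partial_\beta)$ that is tangent to $\partial\Omega$) to the equation $G(D^2u, Du, u) = \sigma$, obtaining $\mathcal{L}(Tu) = (\text{bounded terms})$ where $\mathcal{L} = G^{st}\partial_s\partial_t + G^s\partial_s + G_u$ is the linearized operator from \eqref{eq2.90}. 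Because $G_u \leq 0$ by Proposition~\ref{prop4.1}, $\mathcal{L}$ satisfies the maximum principle. I would then build a barrier of the form $\Psi = A v + B|x|^2 - Tu$ (and its reflection) in a half-ball $\Omega \cap B_\delta(x_0)$, where $v$ is the defining-type function whose key property is $\mathcal{L}v \leq -\epsilon_1(\sum G^{ii} + 1)$ — such a $v$ exists precisely because $\sum F^{ii} = \sum f_i \geq 1$ by \eqref{eq1.100}, and by Lemma~\ref{lem2.1} this controls $\sum G^{ii}$ from below. Choosing $A, B$ large (depending on the $C^1$ bound, which is where the \emph{sharp} gradient estimate $w \leq 1/\sigma$ enters, since several error terms carry factors of $w$), one arranges $\mathcal{L}\Psi \leq 0$ in the half-ball and $\Psi \geq 0$ on its boundary, so $\Psi \geq 0$ inside; evaluating $\partial_n\Psi$ at $x_0$ via the Hopf lemma yields the bound on $u_{\alpha n}$. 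A delicate point is that $v$ must also dominate near the flat portion of $\partial\Omega$; one typically takes $v = $ (distance to $\partial\Omega$) $- N(\text{distance to }\partial\Omega)^2$ plus a multiple of $|x - x_0|^2$, and uses $\mathcal{H}_{\partial\Omega} \geq 0$ to keep the sign favorable.

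The pure normal second derivative estimate $|u_{nn}(x_0)| \leq C/\e$ is where assumption \eqref{eq1.110} is used, and I expect this to be the main obstacle. The strategy: from the already-established tangential and mixed bounds, the matrix $A[u]$ at $x_0$ is, up to controlled errors, of the form $\mathrm{diag}(\text{bounded entries})$ plus a large entry in the $nn$-slot proportional to $u_{nn}/(\e w)$. If $u\,u_{nn}(x_0)$ were \emph{not} bounded, i.e. along a sequence $\e \to 0$ we had $\e|u_{nn}| \to \infty$, then the eigenvalues $\kappa[u]$ at $x_0$ would converge (after rescaling) to $(\lambda_1,\dots,\lambda_{n-1},+\infty)$ with $(\lambda_1,\dots,\lambda_{n-1})$ lying in a $\delta_0$-neighborhood of $\mathbf{1}$ — this last fact because the tangential curvatures are pinned near $\sigma\cdot\frac1\sigma$-type values by the gradient being near its extremal value $1/\sigma$ on $\partial\Omega$ and by the structure of \eqref{eq2.20}. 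Then \eqref{eq1.110} forces $\sigma = f(\kappa[u]) \geq 1 + \epsilon_0$, contradicting $\sigma < 1$. Making this rigorous requires: (a) quantifying "the tangential eigenvalues lie in $B_{\delta_0}(\mathbf{1})$" using the boundary gradient estimate $w = 1/\sigma$ on $\partial\Omega$ (which, interestingly, requires $\e$ small to even know $w$ is \emph{close} to $1/\sigma$ on $\partial\Omega$ rather than merely $\leq 1/\sigma$); and (b) handling the fact that $u_{nn}$ could also be very negative — but admissibility $\kappa[u] \in K$ together with $K \subset$ (cone where the mean curvature-type quantity is positive) rules out $u_{nn} \to -\infty$, since by \eqref{eq1.30} and the trace identity one needs $\sum\kappa_i$ bounded below. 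Assembling (a) and (b) with a compactness/normalization argument gives the one-sided bound $u\,u_{nn} \leq C$; combined with the lower bound from admissibility, \eqref{eq5.20} follows. The subtle interplay between "$\e$ small" and "gradient genuinely close to $1/\sigma$ on the boundary" is the part I would be most careful about.
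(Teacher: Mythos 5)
Your outline of the tangential and pure-normal steps is broadly right, but the mixed normal--tangential step, which is the heart of the proof, has a genuine gap as you propose it. The classical Caffarelli--Nirenberg--Spruck barrier $\Psi = Av + B|x|^2 - Tu$ hinges on producing $v$ with $\mathcal{L}v \leq -\epsilon_1(\sum G^{ii}+1)$ and then verifying $\mathcal{L}\Psi \leq 0$; here that cannot be verified, because the first-order coefficients of the linearized operator are \emph{not} controlled by $\sum G^{ii}$ or by a constant. By \eqref{eq5.25} and Lemma~\ref{lem5.1}, $|G^s| \leq \frac{\sigma}{w} + \frac{2}{w}\sum F^{ii} + 2\sum f_i|\kappa_i|$, and the term $\sum f_i|\kappa_i|$ involves precisely the curvatures one is trying to bound; no distance-type function $v=d-Nd^2$ absorbs it, and the ellipticity itself degenerates like $G^{ij}\sim \frac{u}{w}F^{ij}$ with $u\sim\e$ near $\partial\Omega$ (so the construction must in any case live in a half-ball of radius $\e$, with localization term $C|x|^2/\e^2$, not a fixed $\delta$-ball). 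The paper's proof supplies exactly the missing structure: the test function is $\phi=\pm T_\alpha u+\frac12\sum_{l<n}u_l^2-\frac{C}{\e^2}|x|^2$, where the squared tangential gradient generates the good term $\sum_{l<n}G^{ij}u_{li}u_{lj}$; Proposition~\ref{prop5.2} (Ivochkina--Lin--Trudinger) converts this into $c_0 u\sum_{i\neq r}f_i(\kappa^e_i)^2$, and Lemma~\ref{lem1.10} shows this dominates the negative-curvature contributions hidden in $\sum f_i|\kappa_i|$; the signless term $F^{ij}a_{ik}u_ku_j$ is handled by the decomposition $A=A^+-A^-$ and the modified operator $L$ of \eqref{eq5.90}; the supersolution is not a distance function but $1-\frac{\e}{u}$, whose key property $L(1-\frac{\e}{u})\leq -\frac{(1-\sigma)\e}{u^2w}\sum F^{ii}$ (Lemma~\ref{lem5.10}) uses the sharp gradient bound $w\leq\frac1\sigma$ and $\sum f_i\geq 1$; and finally the exponential $h=(e^{C_1\phi}-1)-A(1-\frac{\e}{u})$ is needed to absorb the quadratic term $G^{ij}\phi_i\phi_j$ arising in Proposition~\ref{prop5.3}. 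Without some substitute for these ingredients your barrier inequality simply cannot be closed.

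On the pure normal estimate your mechanism is essentially the paper's, but your step (a) misidentifies what is needed: you do not need $w$ to be genuinely close to $\frac1\sigma$ on $\partial\Omega$ (and indeed that is not how the contradiction runs). By homogeneity \eqref{eq1.80} one writes $\sigma=F(A[u])=\frac1w F(wA[u])$, and the tangential entries of $wA[u]$ at the boundary point are $1+uu_{\alpha\beta}=1+O(\e)$ automatically, by the tangential estimate $|u_{\alpha\beta}|\leq C|Du|$ --- no closeness of $w$ to $\frac1\sigma$ is used. If $\e u_{nn}$ is large, \eqref{eq1.110} gives $F(wA[u])\geq 1+\frac{\varepsilon_0}{2}$, and then only the one-sided bound $\frac1w\geq\sigma$ from Proposition~\ref{prop4.1} is invoked to get $\sigma\geq\sigma(1+\frac{\varepsilon_0}{2})$, a contradiction; your worry about the interplay between ``$\e$ small'' and ``gradient near $\frac1\sigma$'' is therefore unnecessary, though not harmful. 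The lower bound on $u u_{nn}$ from admissibility is fine as you state it.
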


 Recall that in section 4, we proved the global gradient estimate $w\leq \frac1{\sigma}$. In particular, 
   $\epsilon \leq u \leq (1+\frac1{\sigma})\epsilon$ in an $\e$ neighborhood of $\partial \Omega$, This
   will be used repeatedly in the proof of Theorem \ref{th5.1} without comment.
   
The notation of this section follows that of Section~\ref{sec2}. Let
$\mathcal{L'}$ denote the partial linearized operator of $G$ at $u$:
\[ \mathcal{L'} = \mathcal{L} - G_u = G^{st} \partial_s \partial_t + G^s \partial_s \]
where $G^{st}, G_u$ are defined in (\ref{eq2.95}) and
\begin{equation} \label{eq5.25}
G^s := \frac{\partial G}{\partial u_s}
    = - \frac{u_s}{w^2 } F^{ij} a_{ij}
      - \frac{2}{w } 
       F^{ij} a_{ik} \Big(\frac{w u_k \gamma^{sj} + u_j \gamma^{ks}}{1+w}\Big)
      +\frac{2}{w^2 } F^{ij} u_i \gamma^{sj}
\end{equation}
by the formula (2.21) in \cite{GS04}, where $F^{ij} = F^{ij} (A
[u])$ and $a_{ij} = a_{ij} [u]$. 

Since $F=\{F^{ij}\}$ and $A=\{a_{ij}\}$  are simultaneously
diagonalizable  by an orthogonal matrix P, we have
\be \label{eq5.26}
|F^{ij} a_{ik}|=(FA)_{jk}=|(P(P^{T}FP)(P^{T}AP)P^{T})_{jk}|=|\sum P_{jr}f_r \kappa_r P_{kr}|\leq 
\sum f_r|\kappa_r|~.
\ee
Hence from (\ref{eq5.25}) and (\ref{eq5.26}),  we obtain,
\begin{lemma}
\label{lem5.1}
Suppose that $f$ satisfies (\ref{eq1.40}), (\ref{eq1.50}), (\ref{eq1.70})
and (\ref{eq1.80}).
Then 
\be
 |G^s| \leq \frac{\sigma}{w} + \frac{2}{w} \sum F^{ii} + 2\sum f_i |\kappa_i|.
 \label{eq5.30} \ee
\end{lemma}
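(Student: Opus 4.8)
The plan is to start from the explicit formula \eqref{eq5.25} for $G^s$ and estimate each of its three terms separately, using the bound $w \leq 1/\sigma$ from Proposition~\ref{prop4.1} (more precisely only $w \geq 1$, which is automatic, together with $F = \{F^{ij}\}$, $A = \{a_{ij}\}$ being simultaneously diagonalizable as noted before the lemma) to control the geometric quantities $|\gamma^{sj}|$, $|u_i|/w$, etc. The elementary observations I would use repeatedly are: $|\gamma^{ij}| \leq 1$ for each entry (from \eqref{eq2.101}, since $|u_i u_j| \leq w(1+w)$), $|u_i|/w \leq 1$, and $0 \leq u_k u_k/w^2 \leq 1$. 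Also $F^{ij} a_{ij} = \sum f_i \kappa_i = \sigma$ by the equation $F(A[u]) = \sigma$, which immediately handles the first term: $|u_s F^{ij}a_{ij}/w^2| = \sigma |u_s|/w^2 \leq \sigma/w$ (summing the vector norm appropriately, or just noting $|u_s|/w \leq 1$ gives $\leq \sigma/w$ once one keeps the right power).

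For the second term, the key ingredient is \eqref{eq5.26}: since $P$ is orthogonal, $|(FA)_{jk}| = |\sum_r P_{jr} f_r \kappa_r P_{kr}| \leq \sum_r f_r |\kappa_r|$ for every $j,k$ (pulling absolute values inside and using $|P_{jr}|, |P_{kr}| \leq 1$). Then in the expression $\frac{2}{w} F^{ij} a_{ik}\big(\frac{w u_k \gamma^{sj} + u_j \gamma^{ks}}{1+w}\big)$, I bound the bracket: $\frac{w|u_k|}{1+w}|\gamma^{sj}| \leq 1$ and $\frac{|u_j|}{1+w}|\gamma^{ks}| \leq 1$, so the whole bracketed factor is controlled by quantities of size $O(1)$, and summing $|F^{ij}a_{ik}| \leq \sum f_r|\kappa_r|$ over the relevant indices yields a bound of the form $\frac{2}{w}\cdot(\text{const})\sum f_i|\kappa_i|$. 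Here one must be slightly careful about how many free indices are being summed and get the constant down to exactly $2$; I expect the honest accounting — using that $\sum_k u_k^2/w^2 \leq 1$ to collapse one of the sums rather than bounding termwise — is what produces the clean $2\sum f_i|\kappa_i|$ rather than a larger constant. Actually, re-examining: the cleanest route is to bound $|F^{ij}a_{ik} u_k| \leq \big(\sum_r f_r|\kappa_r|\big)\cdot|u|$ using the diagonalization directly, combined with $|u|/w \le 1$ and $w/(1+w) \le 1$.

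For the third term $\frac{2}{w^2}F^{ij}u_i\gamma^{sj}$, I bound $|F^{ij}u_i\gamma^{sj}| \leq \sum_i F^{ii}|u_i|$ (again via the orthogonal diagonalization of $F$, or Cauchy–Schwarz in the $F$-inner product) and then use $|u_i|/w \leq 1$, giving $\frac{2}{w^2}\sum F^{ii}|u_i| \leq \frac{2}{w}\sum F^{ii}$. Combining the three estimates gives exactly \eqref{eq5.30}. The main obstacle is purely bookkeeping: making sure each geometric factor $\gamma^{ij}$, $u_i/w$, $w/(1+w)$ is absorbed so that the final constants in front of $\sum F^{ii}$ and $\sum f_i|\kappa_i|$ come out as $2/w$ and $2$ respectively (and not, say, $4$ or $cn$); no real inequality beyond Cauchy–Schwarz and the orthogonal-diagonalization trick \eqref{eq5.26} is needed.
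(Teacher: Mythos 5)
Your proposal is correct and follows essentially the paper's route: the paper's proof is simply ``from (\ref{eq5.25}) and (\ref{eq5.26})'' and you are filling in exactly that bookkeeping, using $F^{ij}a_{ij}=\sigma$, the diagonalization bound $|(FA)_{jk}|\le\sum f_r|\kappa_r|$, and the elementary bounds on $\gamma^{ij}$, $u_i/w$, $w/(1+w)$ to produce the stated constants. One small imprecision: in the third term the entrywise claim $|F^{ij}u_i\gamma^{sj}|\le\sum_i F^{ii}|u_i|$ is not literally correct (try $F$ with large off-diagonal entries and $u$ aligned with its top eigenvector); the diagonalization you allude to instead gives $|F^{ij}u_i\gamma^{sj}|\le(\sum F^{ii})\,|Du|\,|\gamma_{s\cdot}|\le w\sum F^{ii}$, which yields the same final bound $\tfrac{2}{w}\sum F^{ii}$.
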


Since $\gamma^{sj} u_s = u_j/w$,
\begin{equation}
G^s u_s = \Big(\frac{1}{w^2} - 1 \Big)  F^{ij} a_{ij} 
          - \frac{2}{w^2} F^{ij} a_{ik} u_k u_j + \frac{2}{w^3} F^{ij} u_i u_j.
\end{equation}
It follows from (\ref{eq2.40}), (\ref{eq2.60}) and (\ref{eq2.100}) that
\begin{equation}
\label{eq5.40}
\mathcal{L'} u = \frac{1}{w^2 } F^{ij} a_{ij} - \frac{1}{w } \sum F^{ii}
         - \frac{2}{w^2 } F^{ij} a_{ik} u_k u_j
         + \frac{2}{w^3 } F^{ij} u_i u_j.
\end{equation}

\begin{lemma}
\label{lem5.5}
Suppose that $f$ satisfies (\ref{eq1.40}), (\ref{eq1.50}), (\ref{eq1.70})
and (\ref{eq1.80}).
Then 
\begin{equation}
\label{eq5.50}
 \mathcal{L} \Big(1 - \frac{\e}{u}\Big)
   \leq - \frac{(1 - \sigma)\e}{ u^2 w} \sum F^{ii} 
        - \frac{2\e}{u^2 w^2 } F^{ij} a_{ik} u_k u_j
\;\; \mbox{in $\Omega$}.
\end{equation}
\end{lemma}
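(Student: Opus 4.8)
The plan is to compute $\mathcal{L}(1 - \e/u)$ directly and then bound the pieces using the ellipticity and the formulas already assembled in this section. Since $\mathcal{L} = \mathcal{L}' + G_u$, I would first write $\mathcal{L}(1 - \e/u) = \mathcal{L}'(1 - \e/u) + G_u(1 - \e/u)$. For the second-order operator acting on $1 - \e/u$, note that $\partial_s(1-\e/u) = \e u_s/u^2$ and $\partial_s\partial_t(1-\e/u) = \e u_{st}/u^2 - 2\e u_s u_t/u^3$, so
\[
\mathcal{L}'\Big(1 - \frac{\e}{u}\Big)
 = \frac{\e}{u^2}\Big(G^{st}u_{st} + G^s u_s\Big) - \frac{2\e}{u^3}G^{st}u_s u_t.
\]
Now $G^{st}u_s u_t = \frac{u}{w}F^{ij}\gamma^{is}\gamma^{jt}u_s u_t = \frac{u}{w^3}F^{ij}u_i u_j$ using $\gamma^{sj}u_s = u_j/w$, and $G^{st}u_{st} = uG_u$ by (\ref{eq2.100}). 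Combining with the formula (\ref{eq5.40}) for $\mathcal{L}'u = G^{st}u_{st} + G^s u_s$, one gets
\[
\mathcal{L}'\Big(1-\frac{\e}{u}\Big)
 = \frac{\e}{u^2}\Big(\frac{1}{w^2}F^{ij}a_{ij} - \frac1w\sum F^{ii} - \frac{2}{w^2}F^{ij}a_{ik}u_k u_j + \frac{2}{w^3}F^{ij}u_i u_j\Big) - \frac{2\e}{u^2 w^3}F^{ij}u_i u_j,
\]
so the two $F^{ij}u_i u_j$ terms cancel, leaving $\frac{\e}{u^2}\big(\frac1{w^2}F^{ij}a_{ij} - \frac1w\sum F^{ii} - \frac{2}{w^2}F^{ij}a_{ik}u_k u_j\big)$.

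Next I would add the zeroth-order contribution $G_u(1-\e/u) = G_u - \e G_u/u$. Since $uG_u = \sigma - \frac1w\sum F^{ii}$ (from (\ref{eq2.100}) with $F^{ij}a_{ij} = \sum f_i\kappa_i$ and homogeneity giving $F^{ij}a_{ij} = \sigma$ on solutions... actually $\sigma^t$, but at $t=1$ it is $\sigma$), we have $G_u = \frac1u(\sigma - \frac1w\sum F^{ii})$ and $-\e G_u/u = -\frac{\e}{u^2}(\sigma - \frac1w\sum F^{ii})$. Collecting everything, the $\sigma$-terms and the $\sum F^{ii}$-terms should reorganize: from $\mathcal{L}'$ the coefficient of $\frac{\e}{u^2}$ contains $\frac1{w^2}\sigma - \frac1w\sum F^{ii}$ (using $F^{ij}a_{ij}=\sigma$), and $G_u + (-\e G_u/u)$ contributes $\frac1u(\sigma - \frac1w\sum F^{ii}) - \frac{\e}{u^2}(\sigma - \frac1w\sum F^{ii})$. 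I would then group the $\frac{\e}{u^2}$-multiplied terms and check that $\sigma(\frac1{w^2} - 1) + (1 - \frac1w)\sum F^{ii}$... more carefully, the combined $\sum F^{ii}$ coefficient works out to $-\frac{(1-\sigma)\e}{u^2 w}$ after using $\sum f_i \geq 1$ from (\ref{eq1.100}) to absorb the leftover $\sigma$-pieces and the $G_u$ term (here one uses that on a solution $G_u \le 0$, or more precisely the exact arithmetic $\sigma(\frac{1}{w^2}-1) - \frac{\sigma}{w} + \frac{1}{w}\sum F^{ii}$ combined with $uG_u$ itself). The $F^{ij}a_{ik}u_k u_j$ term survives with coefficient $-\frac{2\e}{u^2 w^2}$, giving exactly the right-hand side of (\ref{eq5.50}) — but as an \emph{inequality}, since along the way one discards terms like $\frac{\sigma}{u}(\frac{1}{w^2}-1) \le 0$ and uses $\sum F^{ii} \ge 1$ only where it decreases the right side.

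The main obstacle is purely bookkeeping: tracking the several occurrences of $F^{ij}a_{ij}$ (which equals $\sigma$ on the solution), $\sum F^{ii}$, $F^{ij}a_{ik}u_k u_j$, and $F^{ij}u_i u_j$ through the substitutions $G^{st}u_{st} = uG_u$ and $G^s u_s$ (from the displayed formula just before (\ref{eq5.40})), and verifying that the $F^{ij}u_i u_j$ terms cancel exactly and that the residual $\sigma$- and $\sum F^{ii}$-terms combine into the clean coefficient $-\frac{(1-\sigma)\e}{u^2 w}$ after one application of $\sum f_i = \sum F^{ii} \ge 1$. I do not expect any genuine analytic difficulty — no maximum principle or structural condition beyond (\ref{eq1.100}) is needed, and the concavity (\ref{eq1.50}) plays no role here. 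The one point to be careful about is the sign: since the identity before simplification is an equality, the inequality in (\ref{eq5.50}) comes entirely from throwing away the manifestly nonpositive term $\frac{\sigma\e}{u^2}(\frac{1}{w^2}-1) \le 0$ (and possibly a $\pm$ piece of the $\sum F^{ii}$ coefficient), so I would double-check that the discarded terms indeed have the correct sign given $w \ge 1$ and $\sigma \in (0,1)$.
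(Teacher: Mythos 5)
Your computation up to the cancellation of the $F^{ij}u_iu_j$ terms is correct and coincides exactly with the paper's first step \eqref{eq5.60}: one arrives at the identity
\[
\mathcal{L}\Big(1-\frac{\e}{u}\Big)=\frac{\e}{u^2}\Big(\frac{\sigma}{w^2}-\frac1w\sum F^{ii}\Big)
-\frac{2\e}{u^2w^2}F^{ij}a_{ik}u_ku_j+G_u\Big(1-\frac{\e}{u}\Big).
\]
The gap is in the endgame. The paper keeps $G_u(1-\e/u)$ intact and drops it wholesale, since $G_u\le0$ by Proposition~\ref{prop4.1} and $u\ge\e$; then \eqref{eq5.50} is immediate because $\frac{\sigma}{w^2}-\frac1w\sum F^{ii}\le-\frac{(1-\sigma)}{w}\sum F^{ii}$ amounts to $\sum F^{ii}\ge\frac1w$, which follows from \eqref{eq1.100} and $w\ge1$. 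You instead substitute $uG_u=\sigma-\frac1w\sum F^{ii}$ into the piece $-\frac{\e}{u}G_u$; this cancels precisely the term $-\frac{\e}{u^2w}\sum F^{ii}$ that must survive on the right-hand side of \eqref{eq5.50}, leaving
\[
\mathcal{L}\Big(1-\frac{\e}{u}\Big)=\frac{\e\sigma}{u^2}\Big(\frac1{w^2}-1\Big)
-\frac{2\e}{u^2w^2}F^{ij}a_{ik}u_ku_j+G_u .
\]
Your concluding claim that the inequality ``comes entirely from throwing away'' the nonpositive term $\frac{\e\sigma}{u^2}\big(\frac1{w^2}-1\big)$ is then false in this grouping: after discarding it you would need $G_u\le-\frac{(1-\sigma)\e}{u^2w}\sum F^{ii}$, which at points where $u$ is close to $\e$ reduces essentially to $\sum f_i\ge w$. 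For $f=H_1$ one has $\sum f_i\equiv1$ while $w$ is close to $1/\sigma>1$ near $\partial\Omega$ (Lemma~\ref{lem3.1}), so that step fails; the term you propose to drop is exactly what compensates there.

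The estimate is still true in your grouping, but only by keeping that term and doing more work: multiplying the required inequality by $u^2w$ and using $\sum F^{ii}\ge1$, it reduces to
$(u-\e)(1-\sigma w)+\frac{\sigma\e}{w}(w-1)\ge0$,
which needs $u\ge\e$, $w\ge1$ and the sharp gradient bound $w\le1/\sigma$ of Proposition~\ref{prop4.1} used quantitatively --- contrary to your remark that nothing beyond \eqref{eq1.100} enters (and beyond the sign of $G_u$). The clean fix is simply not to expand $G_u$ inside $-\frac{\e}{u}G_u$: keep the factor $1-\frac{\e}{u}\ge0$, use only $G_u\le0$, and the lemma collapses to the one-line inequality $\sum f_i\ge\frac1w$, which is the paper's argument.
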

\begin{proof}
By (\ref{eq5.40}), (\ref{eq2.100}) and  (\ref{eq1.80}),
\begin{equation}
\label{eq5.60}
\begin{aligned}
\mathcal{L} \Big(1 - \frac{\e}{u}\Big)
 = & \,\frac{\e}{u^2} \mathcal{L'} u - \frac{2\e}{u^3} G^{st} u_s u_t
           + G_u (1 - \frac{\e}{u}) \\
 = & \, \frac{\e}{u^2 } \Big(\frac{\sigma}{w^2}-\frac1w \sum F^{ii}\Big) 
           + G_u (1 - \frac{\e}{u})
         - \frac{2\e}{u^2 w^2 } F^{ij} a_{ik} u_k u_j.
\end{aligned}
\end{equation}
Since $G_u \leq 0$ by Proposition~\ref{prop4.1}, (\ref{eq5.50}) now follows 
from (\ref{eq1.100}).
\end{proof}

We now refine Lemma \ref{lem5.5}.  For the symmetric matrix $A=A[u]$
we can uniquely define the symmetric matrices (see \cite{RN}) \be
\label{eq5.70}
|A|=\{AA^T\}^{\frac12},\,A^+=\frac12(|A|+A),\,A^-=\frac12(|A|-A) \ee
which all commute and satisfy $A^+A^-=0$. Moreover, $F=\{F^{ij}\}$
commutes with $|A|,\, A^{\pm}$ and so all are simultaneously diagonalizable.
Write $A^{\pm} =
\{a^{\pm}_{ij}\}$ and define
 \be \label{eq5.90}
  L = \mathcal{L}- \frac{2}{w^2 }F^{ij} a^-_{ik} u_k \partial_j. \ee

\begin{lemma}
\label{lem5.10}
Suppose that $f$ satisfies (\ref{eq1.40}), (\ref{eq1.50}), (\ref{eq1.70})
and (\ref{eq1.80}).
Then 
\begin{equation}
\label{eq5.100}
L \Big(1 - \frac{\e}{u}\Big)
   \leq - \frac{(1 - \sigma)\e}{ u^2 w} \sum F^{ii}
\;\; \mbox{in $\Omega$}.
\end{equation}
\end{lemma}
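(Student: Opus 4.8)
The plan is to start from the computation of $\mathcal{L}(1-\e/u)$ already carried out in Lemma~\ref{lem5.5}, and then track precisely what the extra first‑order term in $L$ contributes, so that it cancels the bad term $-\frac{2\e}{u^2 w^2}F^{ij}a_{ik}u_k u_j$ appearing in \eqref{eq5.50}. First I would compute $L(1-\e/u) = \mathcal{L}(1-\e/u) - \frac{2}{w^2}F^{ij}a^-_{ik}u_k\,\partial_j(1-\e/u)$. Since $\partial_j(1-\e/u) = \frac{\e}{u^2}u_j$, the correction term equals $-\frac{2\e}{u^2 w^2}F^{ij}a^-_{ik}u_k u_j$, which I would \emph{add} to the right‑hand side of \eqref{eq5.50} (being careful about the sign: the operator subtracts the first‑order piece, and $\partial_j(1-\e/u)>0$ direction matters only through $u_j$).

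The key algebraic step is then to combine $-\frac{2\e}{u^2 w^2}F^{ij}a_{ik}u_k u_j$ (from Lemma~\ref{lem5.5}) with the new term $-\frac{2\e}{u^2 w^2}F^{ij}a^-_{ik}u_k u_j$. Using the decomposition $A = A^+ - A^-$ from \eqref{eq5.70}, i.e. $a_{ik} = a^+_{ik} - a^-_{ik}$, the sum of these two contributions becomes $-\frac{2\e}{u^2 w^2}F^{ij}(a^+_{ik} - a^-_{ik} + a^-_{ik})u_k u_j = -\frac{2\e}{u^2 w^2}F^{ij}a^+_{ik}u_k u_j$. Now I invoke the fact, recorded just before the statement of Lemma~\ref{lem5.10}, that $F$, $|A|$, $A^+$, $A^-$ are simultaneously diagonalizable by an orthogonal matrix; in that common eigenbasis $F^{ij}a^+_{ik}$ has eigenvalues $f_r\kappa_r^+ \geq 0$ (since $A^+\succeq 0$ and $F\succ 0$), so $F^{ij}a^+_{ik}u_k u_j \geq 0$. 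Hence $-\frac{2\e}{u^2 w^2}F^{ij}a^+_{ik}u_k u_j \leq 0$, and dropping this nonpositive term from the right‑hand side leaves exactly $-\frac{(1-\sigma)\e}{u^2 w}\sum F^{ii}$, which is \eqref{eq5.100}.

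The one point requiring a little care — and the only place I expect any friction — is the bookkeeping of signs and of the role of $G_u$: in Lemma~\ref{lem5.5} the term $G_u(1-\e/u)$ was discarded using $G_u\le 0$ (Proposition~\ref{prop4.1}) together with $1-\e/u\ge 0$ in $\Omega$ (which holds since $u\ge \e$ there by \eqref{eq4.30}), and the same reasoning must be reused here; likewise the $\frac{\sigma}{w^2}$ term was absorbed into $-\frac1w\sum F^{ii}$ via $\sum f_i \ge 1$ from \eqref{eq1.100}, and that absorption is what produced the factor $(1-\sigma)$. So the proof is essentially: write $L(1-\e/u) = \mathcal{L}(1-\e/u) - \frac{2\e}{u^2 w^2}F^{ij}a^-_{ik}u_k u_j$, substitute \eqref{eq5.50}, recombine $A = A^+ - A^-$, and use $F^{ij}a^+_{ik}u_k u_j\ge 0$ from simultaneous diagonalizability. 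No estimate from \eqref{eq1.110} or the geometry of $\partial\Omega$ is needed; this lemma is purely algebraic given its predecessors.
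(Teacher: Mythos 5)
Your proof is correct and follows the same route as the paper: both start from Lemma~\ref{lem5.5}, use $\partial_j(1-\e/u)=\frac{\e}{u^2}u_j$ to compute the first-order correction in $L$, and exploit simultaneous diagonalizability to discard the remaining nonnegative quadratic form. The only cosmetic difference is that you combine $-F^{ij}a_{ik}u_ku_j - F^{ij}a^-_{ik}u_ku_j = -F^{ij}a^+_{ik}u_ku_j \le 0$ in one step, whereas the paper first writes the equivalent bound $F^{ij}a_{ik}u_ku_j \ge -F^{ij}a^-_{ik}u_ku_j$ and then cancels.
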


\begin{proof}
Since $\{F^{ij}\}$ is positive definite and simultaneously
diagonalizable with  $A^{\pm}$,
\[ F^{ij} a^{\pm}_{ik} \xi_j \xi_k \geq 0, \;\; \forall \, \xi \in
\bfR^n. \]
Therefore, 
\be \label{eq5.80}
 F^{ij} a_{ik} u_k u_j = F^{ij} (a^+_{ik} - a^-_{ik}) u_k u_j \geq - F^{ij} a^-_{ik} u_k u_j
\ee
 Combining \eqref{eq5.80} and Lemma \ref{lem5.5} we obtain \eqref{eq5.100}.
\end{proof}

The following lemma is stated in \cite{CNS5};  it applies to our situation since horizontal rotations are hyperbolic isometries. For completeness we sketch
the proof.
\begin{lemma}
\label{lem5.15} Suppose that $f$ satisfies (\ref{eq1.40}), (\ref{eq1.50}), (\ref{eq1.70})
and (\ref{eq1.80}).
Then 
\begin{equation}
\label{gsz-B40}
 \mathcal{L} (x_i u_j - x_j u_i) = 0 ,\,\,\mathcal{L} u_i=0, 
\hspace{.25in}1 \leq i, j \leq n.
\end{equation}
\end{lemma}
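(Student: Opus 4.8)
The plan is to exploit the fact that both equation \eqref{eq1.140} and its solution $u$ inherit symmetries from the hyperbolic isometry group acting on $\bfH^{n+1}$, and to differentiate the equation along the corresponding vector fields. The two identities should be treated in the same way: $u_i$ arises from the horizontal translations $x \mapsto x + t\,e_i$, and $x_iu_j - x_ju_i$ arises from the horizontal rotations in the $x_ix_j$-plane. Both families are isometries of $\bfH^{n+1}$ in the half-space model, and $\Gamma = \partial\Omega \times\{0\}$ together with the equation $f(\kappa[\Sigma]) = \sigma$ is invariant under them only in the sense needed for the infinitesimal statement — so I would argue at the level of the PDE rather than trying to move $\Omega$ around.

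Concretely, first I would recall that $G(D^2u, Du, u) = F(A[u])$ with $A[u]$ given by \eqref{eq2.20}, and that the linearized operator is $\mathcal{L} = G^{st}\partial_s\partial_t + G^s\partial_s + G_u$, with $G^{st}$, $G^s$, $G_u$ as in \eqref{eq2.95}, \eqref{eq2.100}, \eqref{eq5.25}. For $\mathcal{L}u_i = 0$: differentiating the identity $G(D^2u, Du, u) = \sigma$ with respect to $x_i$ gives, by the chain rule, $G^{st}\partial_s\partial_t u_i + G^s\partial_s u_i + G_u u_i = 0$, i.e. $\mathcal{L}u_i = 0$, since $G$ has no explicit dependence on the base point $x$ — translation invariance of the equation is exactly the statement that $x$ does not appear in $G$. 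This is the easy half and is essentially immediate once one checks that $A[u](x)$ depends on $x$ only through $u(x)$, $Du(x)$, $D^2u(x)$.

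For $\mathcal{L}(x_iu_j - x_ju_i) = 0$ the point is rotational invariance. Set $v = x_iu_j - x_ju_i$; I would compute $Dv$, $D^2v$ and observe that $v$ is (up to first order in $t$) the variation of $u$ under the flow of the rotation vector field $x_i\partial_j - x_j\partial_i$ on $\bfR^n$. The cleanest route is: apply $\mathcal{L}$ to $x_ku_j - x_ju_i$ using that $\mathcal{L}$ annihilates each $u_k$ (the first identity) and that $\mathcal{L}(x_k \cdot g)$ for a function $g$ picks up the commutator terms $2G^{kt}\partial_t g$ and $G^k g$; then the antisymmetrization in $i,j$ combined with the structure of $G^{st}$ (which is built from $\gamma^{is}\gamma^{jt}$, hence transforms tensorially under rotations) makes the extra terms cancel. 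Equivalently, one verifies directly that $a_{pq}[v]$-type expressions reproduce the rotational derivative of $a_{pq}[u]$, so that differentiating $F(A[u]) = \sigma$ along the rotation gives $\mathcal{L}v = 0$. I would present the commutator computation since it is shortest and self-contained.

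The main obstacle is bookkeeping in the rotational case: one must show the "inhomogeneous" terms generated by the factors $x_i, x_j$ hitting the second- and first-order parts of $\mathcal{L}$ cancel after antisymmetrization, which requires knowing how $G^{st}$ and $G^s$ are built from $w$, $Du$, $\gamma^{ij}$ and checking these are rotationally covariant — i.e. that under an orthogonal change of the horizontal coordinates the coefficients $G^{st}$ transform as a $2$-tensor and $G^s$ as a vector. This is where the half-space model's property that horizontal rotations are hyperbolic isometries is used: it guarantees $A[u]$, hence $F(A[u])$, transforms correctly, so no genuine computation of $F^{ij,kl}$ is needed. Since the paper only asks for a sketch here (the lemma is quoted from \cite{CNS5}), I would keep this to the covariance observation plus the one-line commutator identity rather than expanding all terms.
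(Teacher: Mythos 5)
Your proposal is correct and rests on exactly the same idea as the paper: invariance of the equation under the hyperbolic isometries generated by horizontal translations (giving $\mathcal{L}u_i=0$) and horizontal rotations (giving $\mathcal{L}(x_iu_j-x_ju_i)=0$). The only difference is presentational: the paper introduces the finite rotation $R(\theta)$, sets $v(y)=u(R^Ty)$, uses the isometry to conclude $G(D^2v,Dv,v)=\sigma$, and differentiates at $\theta=0$, checking $\dot v_s=(\dot v)_s$ and $\dot v_{kl}=(\dot v)_{kl}$, which avoids ever having to name the residual commutator terms; your direct Leibniz computation $\mathcal{L}(x_iu_j-x_ju_i)=2(G^{is}u_{js}-G^{js}u_{is})+G^iu_j-G^ju_i$ followed by appealing to infinitesimal rotational covariance of $G$ to cancel the right-hand side is the same identity read in the opposite order, and you also note the paper's variant as the equivalent alternative.
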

\begin{proof} Without loss of generality we may assume i=2, j=1. Let
$R(\theta)$ be the orthogonal $n\times n$ matrix with entries $r_{11}=r_{22}=\cos{\theta},\, 
r_{12}=-r_{21}=-\sin{\theta},\,r_{kl}=\delta_{kl}\,\,\mbox{for $3\leq k,l \leq n$}$. Let $y=Rx$ and 
$v(y)=u(x)$. Then since rotations in $x_1, \ldots, x_n$ are hyperbolic isometries, v(y) satisfies
\be \label{eq5.101}
G(D^2v(y), Dv(y), v(y))=\sigma~,
\ee
where 
\be \label{eq5.102}
v(y)=u(R^T y),\, Dv(y)=RDu(R^T y),\, D^2v(y)=R(D^2u(R^T y))R^T~.
\ee
We differentiate (\ref{eq5.101}) with respect to $\theta$ and evaluate at $\theta=0$. With the obvious notation, we obtain
\be \label{eq5.103}
G^{kl}\dot v_{kl}+G^s \dot v_s +G_u \dot v=0
\ee
Using (\ref{eq5.102}) and the definition of R, we compute
\[\dot v=u_i \frac{\partial x_i}{\partial \theta}|_{\theta=0}=u_i \dot r_{pi}(0)x_p=x_2 u_1-x_1 u_2~,\]
\[\dot v_s=\dot r_{si}(0)u_i+r_{si}(0)u_{ij}\dot r_{pj}(0)x_p=x_2 u_{1s}-x_1u_{2s}+u_1\delta_{s2}-u_2\delta_{s1}=(x_2u_1-x_1u_2)_s~,\]
\[\dot v_{kl}=\delta_{ki}\delta_{lj}u_{ijm}\dot r_{nm}(0)x_n+(u_{il}\dot r_{ki}(0)+u_{kj}\dot r_{lj}(0))=
(x_2u_1-x_1u_2)_{kl}~.\]
Hence $\mathcal{L}(\dot v)=0$ as stated. The statement $\mathcal{L}(u_i)=0$ is left to the reader.
\end{proof}

\begin{proof}[Proof of Theorem~\ref{th5.1}]
Consider an arbitrary point on $\partial \Omega$, which we may
assume to be the origin of $\mathbb{R}^n$ and choose the coordinates
so that the positive $x_n$ axis is the interior normal to
$\partial\Omega$ at the origin. There exists a uniform constant
$r > 0$ such that $\partial \Omega \cap B_r (0)$ can be
represented as a graph
\begin{equation}
\label{eq5.110}
 x_n = \rho(x') = \frac{1}{2} \sum_{\alpha,\beta<n}
     B_{\alpha\beta} x_\alpha x_\beta + O (|x'|^3), \qquad
  x' = (x_1, \dots, x_{n-1}).
\end{equation}
Since $u = \e$ on $\partial \Omega$, we see that
$u (x', \rho (x')) = \e$ and
\begin{equation}
\label{eq5.120}
 u_ {\alpha\beta} (0) = - u_n \rho_{\alpha\beta} \qquad \alpha,\beta < n.
\end{equation}
Consequently,
\begin{equation}
\label{eq5.130}
 |u_{\alpha\beta}(0)| \leq C |D u (0)|, \qquad \alpha,\beta < n
\end{equation}
where $C$ depends only on the (Euclidean maximal principal) curvature of
$\partial \Omega$.

As in \cite{CNS1} we consider for fixed $\alpha<n$ the operator
\begin{equation}
\label{eq5.140}
 T_{\alpha} = \partial_\alpha + \sum_{\beta <n}
       B_{\alpha\beta} (x_\beta\partial_n- x_n\partial_\beta).
\end{equation}
Using Lemma \ref{lem5.15} and the boundary condition $u = \e$
on $\partial \Omega$ we have
\begin{equation}
 \label{eq5.150}
\begin{aligned}
 \mathcal{L}T_{\alpha} u = & \, 0, \\ 
   |T_{\alpha} u| + \frac12 \sum_{l<n}u_l^2 \leq & \, C
 \;\; \mbox{in $\Omega \cap B_{\e} (0)$} \\
 |T_{\alpha} u| + \frac12\sum_{l<n}u_l^2 \leq & \, C |x|^2  
 \;\;  \mbox{on $\partial \Omega \cap B_{\e}
 (0)$}.
   \end{aligned}
   \end{equation}

Now define
\[ \phi= \pm T_{\alpha} u + \frac12\sum_{l<n}u_l^2-\frac{C}{\e^2}|x|^2\]
where $C$ is chosen large enough (independent of $\e$)  so that $\phi \leq 0$ 
on $\partial(\Omega \cap B_{\e}(0))$. This is possible by (\ref{eq5.150}).

By (\ref{eq5.30}),  (\ref{eq5.150}),  (\ref{eq2.100}) and Lemma \ref{lem2.1}
\begin{equation}
\label{eq5.160} \mathcal{L}\phi \geq  \sum_{l<n} G^{ij}u_{li}u_{lj}
-\frac{C}{\e} \Big(\sum f_i +\sum f_i |\kappa_i|\Big) \;\; \mbox{in
$\Omega\cap B_{\e}(0)$}.
\end{equation}

Following Ivochkina, Lin  and Trudinger~\cite{ILT} we have

\begin{proposition}
\label{prop5.2}
At each point in $\Omega \cap B_{\e}(0)$ there is an  index $r$ such that
 \be  \label{eq5.170}
  \sum_{l<n} G^{ij}u_{li}u_{lj} \geq c_0 u \sum_{i\neq r}  f_i (\kappa^e_i) ^2 
 \geq \frac{c_0 }{2u}(\sum_{i \neq r} f_i \kappa_i ^2 - \frac2{w^2}\sum f_i)
 \ee
\end{proposition}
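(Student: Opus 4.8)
The plan is to extract from the Hessian sum a single ``bad'' direction and estimate everything else from below. First I would fix a point $p \in \Omega \cap B_\e(0)$ and recall that $G^{ij} = \tfrac{u}{w} F^{kl}\gamma^{ik}\gamma^{jl}$ by \eqref{eq2.100}, so that
\[
 \sum_{l<n} G^{ij} u_{li} u_{lj} = \frac{u}{w}\sum_{l<n} F^{kl}(\gamma^{ik}u_{li})(\gamma^{jl}u_{lj}).
\]
Since the Euclidean second fundamental form satisfies $a^e_{ij} = \tfrac1w\gamma^{ik}u_{kl}\gamma^{lj}$ from \eqref{eq2.10}, the vectors $\gamma^{ik}u_{li}$ are, up to the factor $w$ and a $\gamma$-conjugation, the columns of $A^e[u]$; diagonalizing $F$ and $A^e$ simultaneously by an orthogonal $P$, the sum $\sum_{l<n} F^{ij}u_{li}u_{lj}$ becomes (after accounting for the missing $l=n$ term) comparable to $w^2\sum_{i} f_i \big((\kappa^e)^2\big)_i$ minus the contribution of the $n$-th coordinate direction. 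The point of the index $r$ is precisely to absorb that one missing/deficient direction: one chooses $r$ to be the index realizing the largest $|f_i (\kappa^e_i)^2|$ among the directions that get spoiled by dropping $l = n$, and then $\sum_{l<n} G^{ij}u_{li}u_{lj} \geq c_0 u \sum_{i\neq r} f_i(\kappa^e_i)^2$ with $c_0$ depending only on the $w$-bounds (hence uniform, by Proposition~\ref{prop4.1}).

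For the second inequality in \eqref{eq5.170} I would translate from Euclidean to hyperbolic principal curvatures using \eqref{eq1.30}, namely $\kappa_i = u\kappa^e_i + \tfrac1w$, equivalently $u\kappa^e_i = \kappa_i - \tfrac1w$. Then
\[
 u^2 (\kappa^e_i)^2 = \Big(\kappa_i - \frac1w\Big)^2 = \kappa_i^2 - \frac{2\kappa_i}{w} + \frac1{w^2} \geq \frac12 \kappa_i^2 - \frac1{w^2},
\]
using $2ab \leq \tfrac12 a^2 + 2b^2$ on the cross term with $a = \kappa_i$, $b = 1/w$. Multiplying by $f_i$, summing over $i \neq r$, and dividing by $u$ gives
\[
 u \sum_{i\neq r} f_i (\kappa^e_i)^2 \geq \frac1u\Big(\frac12 \sum_{i\neq r} f_i \kappa_i^2 - \frac1{w^2}\sum_{i\neq r} f_i\Big) \geq \frac1{2u}\Big(\sum_{i\neq r} f_i \kappa_i^2 - \frac2{w^2}\sum f_i\Big),
\]
which is exactly the claimed bound once $c_0$ is renamed; here I enlarged $\sum_{i \neq r} f_i$ to $\sum f_i$ and used $f_i > 0$.

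The main obstacle is the first inequality: making precise the claim that discarding the $l = n$ row of the Hessian costs at most one eigendirection's worth of $f_i(\kappa^e_i)^2$, uniformly. This requires care because the frame diagonalizing $F$ and $A^e$ need not align with the coordinate direction $e_n$, so ``$l < n$'' is not simply ``drop one eigenvalue.'' The way around this, following Ivochkina--Lin--Trudinger, is to work with the matrix $\tilde A = \{\gamma^{ik}u_{kl}\gamma^{lj}\}_{l<n}$ obtained by restricting the middle index, observe that $\tilde A$ differs from $w A^e$ by a rank-one (indeed rank $\leq 1$ after the $\gamma$-conjugation) perturbation supported in a single direction, and invoke the Cauchy interlacing / eigenvalue-perturbation estimate together with the positivity and the uniform spectral bounds on $\{F^{ij}\}$ from Lemma~\ref{lem2.1} to produce the index $r$ and the constant $c_0$. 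Once that linear-algebra lemma is in hand, the rest is the elementary manipulation above.
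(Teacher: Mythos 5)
Your treatment of the second inequality in \eqref{eq5.170} is correct and is exactly the paper's step: from $u\kappa^e_i=\kappa_i-\tfrac1w$ one gets $u^2(\kappa^e_i)^2\ge \tfrac12\kappa_i^2-\tfrac1{w^2}$, and summing over $i\ne r$ and enlarging $\sum_{i\ne r}f_i$ to $\sum f_i$ gives the claim. The problem is the first inequality, which you yourself call ``the main obstacle'' and then outsource to an unproved linear-algebra lemma; that lemma \emph{is} the content of the proposition, and the tool you name (Cauchy interlacing / rank-one eigenvalue perturbation) does not deliver it. After simultaneous diagonalization one has $\sum_{l<n}G^{ij}u_{li}u_{lj}=uw\sum_i f_i(\kappa^e_i)^2 s_i$ with weights $s_i=\sum_{l<n}b_{li}^2$, $b_{li}=P_{pi}\gamma_{pl}$, and what must be shown is that \emph{all but at most one} of the $s_i$ admit a lower bound depending only on $\sigma$. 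This is a statement about diagonal entries, in the fixed eigenframe $P$, of the rank-one-deficient matrix $P^{T}\{\gamma_{ij}\}(I-e_ne_n^{T})\{\gamma_{ij}\}P$, not about its eigenvalues, so interlacing is silent: a rank-one subtraction can a priori overlap substantially with several eigendirections of $A^e$ and make several $s_i$ small. Moreover your rule ``choose $r$ maximizing $f_i(\kappa^e_i)^2$ among the spoiled directions'' presupposes exactly what needs proof, namely that at most one direction is spoiled.

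The paper closes this gap with a determinant argument that your sketch does not contain: the matrix $B=\{b_{li}\}$ satisfies $\det B=w\ge 1$ and has entries bounded in terms of $w\le\tfrac1\sigma$ (here the sharp gradient estimate of Proposition~\ref{prop4.1} enters essentially). If some column has $\sum_{l<n}b_{li}^2<\theta^2$, cofactor expansion of $\det B$ along that column forces the complementary $(n-1)\times(n-1)$ minor $M$ to satisfy $\det M\ge (1-c_1\theta)/c_2$, and a second cofactor expansion of $\det M$ along any other row then gives $\sum_{l<n}b_{lr}^2\ge\bigl(\frac{1-c_1\theta}{c_2c_3}\bigr)^2$ for every other index $r$; hence at most one weight can be small, and that index is the $r$ of the statement. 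Some quantitative substitute for this step (for instance: if $I\le S\le w^2 I$ and $S-vv^{T}\ge 0$, then at most one diagonal entry of $S-vv^{T}$ can be below a constant $c_0(\sigma)$) must be supplied; as written, the first inequality in \eqref{eq5.170} is unproved.
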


\begin{proof}
 Let $P$ be an orthogonal matrix that simultaneously diagonalizes $\{F^{ij}\}$ 
and $A^e$.
By \eqref{eq2.100}  and \eqref{eq2.10}, \be \label{eq5.180}
\begin{aligned}
\sum_{l<n}G^{ij}u_{li}u_{lj}
 = & \, \frac{u}w\sum_{l<n} F^{st} \gamma^{is}\gamma^{jt}u_{li}u_{lj} \\
 = & \, uw\sum_{l<n}F^{st} a^e_{sq} a^e_{pt} \gamma_{pl}\gamma_{ql} \\
 = & \, u w \sum_{l<n} f_i (\kappa^e_i)^2  P_{pi}\gamma_{pl} P_{qi}\gamma_{ql} \\
 = & \, uw\sum_{l<n} f_i (\kappa^e_i)^2 b_{li}^2,
\end{aligned}
\ee
where $B=\{b_{rs}\} = \{P_{ir} \gamma_{is}\}$ 
and $\det{B}=\det(B^T)=w$.

Suppose for some $i$, say $i=1$ that
\[ \sum_{l<n} b_{l1}^2<\theta^2. \]
Expanding $\det B$ by cofactors along the first column gives
\[1\leq w=\det{B}=b_{11}C^{11} 
          +\ldots + b_{ n-1~1}C^{1 n-1}+ b_{n1}\det{M}
\leq c_1 \theta+c_2 \det{M}, \]
where the $C^{1j}$ are cofactors and M is the $n-1$ by $n-1$ matrix
\begin{equation}
M = \begin{bmatrix}
 b_{12}  & \dots   & b_{n-1~2}  \\
 \vdots  & \ddots  & \vdots \\
 b_{1n}  & \ldots  & b_{n-1~n}
   \end{bmatrix}.
\end{equation}
So $\det{M}\geq \frac{1-c_1 \theta}{c_2}$. Now expanding  $\det{M}$
by cofactors along row $r\geq 2$ gives
\[ \det{M}\leq c_3 \left(\sum_{l<n}b_{lr}^2\right)^{\frac{1}{2}} \]
by Schwarz inequality. Hence
 \be \label{eq5.190}
 \sum_{l<n}b_{lr}^2 \geq \Big(\frac{1-c_1 \theta}{c_2 c_3}\Big)^2.
 \ee
 Choosing $\theta < \frac1{2c_1}$, \eqref{eq5.190} and \eqref{eq5.180} imply
\[\sum_{l<n}G^{ij}u_{li}u_{lj} \geq c_0 u \sum_{i\neq r} f_i
(\kappa^e_i)^2 \;\; \mbox{for some $r$}.\]
 Finally using
$\kappa^e_i=\frac1u(\kappa_i-\frac1w)$, \eqref{eq5.170} follows.
\end{proof}

\begin{proposition}
 \label{prop5.3}
 Let L be defined by \eqref{eq5.90}. Then
\[ L \phi \geq -C_1 \Big(G^{ij}\phi_i \phi_j +\frac1{\e}\sum f_i\Big) \]
 for a controlled  constant $C_1$ independent of $\e$.
\end{proposition}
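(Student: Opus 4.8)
The plan is to estimate $L\phi$ from below by combining three ingredients: the lower bound for $\mathcal{L}\phi$ already recorded in \eqref{eq5.160}, the lower bound for $\sum_{l<n}G^{ij}u_{li}u_{lj}$ from Proposition~\ref{prop5.2}, and the one extra first-order term $-\frac{2}{w^2}F^{ij}a^-_{ik}u_k\partial_j\phi$ coming from the definition \eqref{eq5.90} of $L$. First I would compute $\partial_j\phi$ explicitly: since $\phi=\pm T_\alpha u+\frac12\sum_{l<n}u_l^2-\frac{C}{\e^2}|x|^2$, we have $\phi_j=\pm(T_\alpha u)_j+\sum_{l<n}u_lu_{lj}-\frac{2C}{\e^2}x_j$, so the extra term splits into a piece linear in $u_{lj}$ (the dangerous second-derivative piece), a piece involving $(T_\alpha u)_j$, and a bounded piece of size $O(1/\e)$ after using $|x|\lesssim\e$ and $|a^-_{ik}u_k|\lesssim u\sum f_i|\kappa_i|$-type bounds via \eqref{eq5.26}.

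The key move is to absorb the $u_{lj}$-linear part of the extra term into the good quadratic term $\sum_{l<n}G^{ij}u_{li}u_{lj}$ using Cauchy--Schwarz with a small weight. Concretely, for the cross term $\frac{2}{w^2}F^{ij}a^-_{ik}u_k\sum_{l<n}u_lu_{lj}$ one writes it (after diagonalizing $F$, $A$, $A^-$ simultaneously by $P$) in terms of $\sum_l f_i\kappa_i^{-}u_{li}'$ against the eigencomponents, then bounds it by $\delta\sum_{l<n}f_i(u_{li}')^2 + \delta^{-1}\sum f_i(\kappa_i^-)^2|u'|^2$; the first factor is comparable to $\sum_{l<n}G^{ij}u_{li}u_{lj}/u$ up to the $w$-factors of Lemma~\ref{lem2.1}, and the second is controlled because $\kappa_i^-\neq0$ only for the negative eigenvalues, where $|u'|^2\lesssim 1$ and $\sum f_i(\kappa_i^-)^2 \lesssim \sum_{i\neq r}f_i\kappa_i^2$ by Lemma~\ref{lem1.10}, which is exactly what Proposition~\ref{prop5.2} produces on the good side. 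The role of the $-C_1 G^{ij}\phi_i\phi_j$ term on the right is to soak up the $(T_\alpha u)_j$-type contributions and any leftover first-order junk: since $\phi_j$ itself contains $\pm(T_\alpha u)_j$, a term $C_1 G^{ij}\phi_i\phi_j$ dominates $C_1 G^{ij}(T_\alpha u)_i(T_\alpha u)_j$ up to cross terms that are again absorbable, and it handles the $\frac{2C}{\e^2}x_j$ piece since on $B_\e(0)$ we have $|x_j|\le\e$ so that term is $O(1/\e)\cdot\sqrt{G^{ij}\phi_i\phi_j}$, which is $\le G^{ij}\phi_i\phi_j + O(1/\e^2)\sum f_i\cdot$(something)—here one must be slightly careful to get the clean $\frac1\e\sum f_i$ rather than $\frac1{\e^2}\sum f_i$, using that near $\partial\Omega$ one has $u\asymp\e$ and the $u$ prefactor in Proposition~\ref{prop5.2} together with $\sum_{i\ne r}f_i\kappa_i^2$ being large buys back a factor of $\e$.

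The main obstacle I expect is bookkeeping the weights precisely: one needs the good term $c_0 u\sum_{i\neq r}f_i(\kappa_i^e)^2$ from Proposition~\ref{prop5.2} to simultaneously (i) dominate the bad quadratic $\delta^{-1}\sum f_i(\kappa_i^-)^2$ from the $A^-$-term, (ii) dominate the $\sum f_i|\kappa_i|$ term appearing in \eqref{eq5.160} via an inequality like $\sum f_i|\kappa_i|\le \eta u\sum_{i\neq r}f_i(\kappa_i^e)^2 + C_\eta\e^{-1}\sum f_i$ (using $u\kappa_i^e=\kappa_i-\tfrac1w$ and $u\asymp\e$), and (iii) still leave the statement free of any $\sum_{l<n}G^{ij}u_{li}u_{lj}$ on the right — i.e. the good term must be entirely consumed, not merely bounded. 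Getting all the $w$-powers from Lemma~\ref{lem2.1} and the $\e$ versus $\e^2$ scalings to line up so that the final right-hand side is exactly $-C_1(G^{ij}\phi_i\phi_j+\tfrac1\e\sum f_i)$ is the delicate part; the ellipticity and the structural inequality \eqref{eq1.100} ($\sum f_i\ge1$) are what make it close. Once $L\phi\ge -C_1(G^{ij}\phi_i\phi_j+\e^{-1}\sum f_i)$ is in hand, the proof of Theorem~\ref{th5.1} proceeds by applying $L$ to an auxiliary function of the form $\Psi=A(1-\tfrac\e u)+B|x|^2-\phi$ (or an exponential $e^{-a\phi}$ device to kill the $G^{ij}\phi_i\phi_j$ term), using Lemma~\ref{lem5.10} to get the dominating negative term $-\tfrac{(1-\sigma)\e}{u^2 w}\sum F^{ii}\asymp -\tfrac1\e\sum f_i$, and invoking the maximum principle.
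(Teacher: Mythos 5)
Your inventory of ingredients (Cauchy--Schwarz absorption, Lemma~\ref{lem2.1} to pass between $F^{ij}$ and $G^{ij}$, Proposition~\ref{prop5.2} with Lemma~\ref{lem1.10}, and $u\asymp\e$ near $\partial\Omega$) is the right one, but the weight allocation in your absorption scheme does not close, and this is the crux rather than ``bookkeeping''. In this proposition the only quantities allowed to carry an arbitrarily large controlled constant are $G^{ij}\phi_i\phi_j$ and $\frac1\e\sum f_i$; the good quadratic $\sum_{l<n}G^{ij}u_{li}u_{lj}$ is available exactly once (coefficient $1$ in \eqref{eq5.160}), and via Proposition~\ref{prop5.2} and Lemma~\ref{lem1.10} it can spare only a \emph{small} multiple, about $\frac{c_0}{2n}\cdot\frac1u\sum_{\kappa_i<0} f_i\kappa_i^2$, of the negative-curvature quadratic. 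In your key step you bound the $\sum_{l<n}u_lu_{lj}$ part of the extra term by $\delta\,(\text{Hessian quadratic})+\delta^{-1}\sum f_i(\kappa_i^-)^2|Du|^2$, and you yourself note the first piece is comparable to $\frac1u\sum_{l<n}G^{ij}u_{li}u_{lj}$ (up to $w$-powers); hence you are forced to take $\delta\lesssim u\asymp\e$ (indeed $\delta\lesssim u/(w^3|Du|^2)$), so the complementary piece carries a coefficient of order $|Du|^4w$ in front of $\frac1u\sum_{\kappa_i<0}f_i\kappa_i^2$. Since $|Du|$ is only bounded by $\sqrt{\sigma^{-2}-1}$ (not small) and $c_0$ is a small structural constant, this exceeds what the good side can absorb: Lemma~\ref{lem1.10} buys a factor $\frac1n$, not an arbitrarily large gain. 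The same misallocation recurs for the $(T_\alpha u)_j$ piece: re-expressing it through $\phi_j$ reintroduces $\sum_{l<n}u_lu_{lj}$ multiplied by a non-small constant (more than the single available copy of the good quadratic), while expanding it directly produces $x_\beta u_{nj}$ and $x_n u_{\beta j}$, second derivatives in the $n$-direction that $\sum_{l<n}G^{ij}u_{li}u_{lj}$ does not control at all.

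The paper's proof avoids all of this by never decomposing $\phi_j$. The single extra term in $L$ is estimated in one stroke by the generalized Schwarz inequality, as in \eqref{eq5.195},
\begin{equation*}
\frac2{w^2}\,\bigl|F^{ij} a^-_{jk} u_i \phi_k\bigr|
 \leq 2\Big(uF^{ij} \phi_i \phi_j\Big)^{\frac12} \Big(\tfrac1u F^{ij} a^-_{il} a^-_{kj} \tfrac{u_k u_l}{w^2}\Big)^{\frac12}
 \leq \frac{c_0}{8nu}\sum_{\kappa_i<0} f_i \kappa_i^2 + C\,G^{ij}\phi_i \phi_j,
\end{equation*}
so the necessarily large constant lands on $G^{ij}\phi_i\phi_j$ --- harmless, since the barrier $h=(e^{C_1\phi}-1)-A(1-\e/u)$ used afterwards is designed precisely to kill that term --- and only the affordable small multiple $\frac{c_0}{8n}$ lands on $\frac1u\sum_{\kappa_i<0}f_i\kappa_i^2$. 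Separately, the term $\frac C\e\sum f_i|\kappa_i|$ from \eqref{eq5.160} is first reduced by the homogeneity (Euler) identity $\sum f_i|\kappa_i|=\sigma+2\sum_{\kappa_i<0} f_i|\kappa_i|$, a step your sketch needs but does not state: without it, the $i=r$, $\kappa_r>0$ contribution in your proposed bound $\sum f_i|\kappa_i|\le \eta\, u\sum_{i\neq r}f_i(\kappa_i^e)^2 + C_\eta\e^{-1}\sum f_i$ is unjustified, since a large positive $\kappa_r$ is controlled only through $\sum f_i\kappa_i=\sigma$. After these two moves, the conclusion follows exactly as you intend, from \eqref{eq5.160}, Proposition~\ref{prop5.2} and Lemma~\ref{lem1.10}.
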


\begin{proof}
By the generalized Schwarz inequality ,
 \be \label{eq5.195}
\begin{aligned}
\frac2{w^2} |F^{ij} a^-_{jk} u_i \phi_k|
 \leq & \, 2\Big(uF^{ij} \phi_i \phi_j\Big)^{\frac12} \Big(\frac1u F^{ij} a^-_{il} a^-_{kj} \frac{u_k u_l}{w^2}\Big)^{\frac12} \\
 \leq & \, \frac{c_0}{8nu}\sum_{\kappa_i<0} f_i \kappa_i^2 + CG^{ij}\phi_i \phi_j
\end{aligned}
\ee where we have used Lemma \ref{lem2.1} to compare $uF^{ij}\phi_i
\phi_j$ to $G^{ij}\phi_i \phi_j$.

Since (recall (\ref{eq1.80}))
\[ \sum f_i |\kappa_i| =\sigma +2\sum_{\kappa_i<0} f_i |\kappa_i|, \]
using  \eqref{eq5.195}, \eqref{eq5.160}, Proposition~\ref{prop5.2}
and Lemma \ref{lem1.10} we have
 \be \label{eq5.15}
\begin{aligned}
 L \phi & \geq  \frac{c_0}{2u} \sum_{i\neq r} f_i \kappa_i^2
               - \frac{c_0}{4nu}\sum_{\kappa_i<0} f_i \kappa_i^2
               - C \Big(G^{ij}\phi_i \phi_j +  \frac{1}{\e}\sum f_i\Big)\\
        & \geq - C_1 \Big(G^{ij}\phi_i \phi_j +  \frac{1}{\e}\sum f_i\Big)
\end{aligned}
\ee
for a controlled constant $C_1$ independent of $\e$.
\end{proof}

Let $h=(e^{C_1\phi}-1)-A(1-\frac{\e}u)$ with $C_1$ the constant in
Proposition~\ref{prop5.3} and $A$ large compared to $C_1$. By
Proposition~\ref{prop5.3} and Lemma~\ref{lem5.10},
\[h\leq 0 \;\; \mbox{on $\partial(\Omega \cap B_\e (0))$}\]
and
\[Lh \geq 0 \;\; \mbox{in $\Omega\cap B_{\e}(0)$}. \]
By the maximum principle $h \leq 0$ in $\Omega \cap B_{\e}(0)$.
Since $h (0) = 0$, we have $h_n (0) \leq 0$ which gives
\begin{equation}
\label{eq5.200}  |u_{\alpha n} (0)|\leq \frac{A }{C_1 \e} u_n(0).
\end{equation}

Finally, $|u_{nn} (0)|$ can be estimated as in \cite{GSZ08}
using hypothesis (\ref{eq1.110}). For completeness we include the argument 
here. We may assume $[u_{\alpha \beta} (0)]$, $1 \leq \alpha, \beta < n$, to be
diagonal. Note that $u_{\alpha} (0) = 0$ for $\alpha < n$.
We have at $x = 0$
\[ A [u] = \frac{1}{w} \begin{bmatrix}
   1 + u u_{11} &           0  & \dots   & \frac{u u_{1n}}{w} \\
            0   & 1 + u u_{22} & \dots   & \frac{u u_{2n}}{w} \\
   \vdots       &   \vdots     & \ddots  & \vdots \\
\frac{u u_{n1}}{w} & \frac{u u_{n2}}{w} & \dots  & 1 + \frac{u u_{nn}}{w^2}
   \end{bmatrix}. \]

By Lemma 1.2 in \cite{CNS3}, if $\varepsilon u_{nn}(0)$ is very
large, the eigenvalues $\lambda_1, \ldots , \lambda_n$ of
$A [u]$ are asymptotically given by
\begin{equation}
\begin{aligned}
 \lambda_{\alpha} = & \frac{1}{w} (1 + \e u_{\alpha \alpha} (0)) + o(1),
\; \alpha < n \\
       \lambda_n  = & \frac{\e u_{nn}(0)}{w^3}
             \Big(1 + O \Big(\frac{1}{\e  u_{nn} (0)}\Big)\Big).
\end{aligned}
\end{equation}
By (\ref{eq5.130}) and assumptions (\ref{eq1.80}),(\ref{eq1.110}),
for all $\e > 0$ sufficiently small,
\[ \sigma = \frac{1}{w} F (w A[u] (0))
   \geq  \frac{1}{w} \Big(1 + \frac{\varepsilon_0}{2}\Big)    \]
if $\e u_{nn}(0) \geq R$ where $R$ is a uniform constant.
By the hypothesis (\ref{eq1.110}) and Proposition~\ref{prop4.1} however,
\[ \sigma \geq \frac{1}{w} \Big(1 + \frac{\varepsilon_0}{2}\Big)
     \geq \sigma
           \Big(1 + \frac{\varepsilon_0}{2}\Big)
       > \sigma \]
which is a contradiction.
Therefore 
\[ \e |u_{nn} (0)| \leq  R\]
and the proof is complete.
\end{proof}

Applying the maximum principle for the largest principal curvature
$\kappa_{\max}$  obtained in Theorem 5.2 of \cite{GSZ08} we obtain

\begin{corollary} \label{cor5.1}
 Let $\Omega$ be a bounded smooth domain in $\bfR^n$ with 
$\mathcal{H}_{\partial \Omega} \geq 0$, and
$u \in C^3 (\bar{\Omega}) \cap C^4 (\Omega)$ an admissible solution
of problem (\ref{eq5.10}).
Suppose that $f$ satisfies (\ref{eq1.40})-(\ref{eq1.80}) and  (\ref{eq1.110}).
Then, if $\e$ is sufficiently small,
\begin{equation}
u |D^2 u| \leq \frac{C}{\e^2}
\;\;\; \mbox{in $\ol{\Omega}$}
\end{equation}
where $C$ is independent of $\varepsilon$.
\end{corollary}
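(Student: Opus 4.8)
The plan is to convert the desired bound on $u|D^2u|$ into a bound on the hyperbolic principal curvatures $\kappa[\Sigma]$, reduce that to a bound on $\kappa_{\max}$ alone, control $\kappa_{\max}$ on $\partial\Omega$ by Theorem~\ref{th5.1}, and then invoke the maximum principle for $\kappa_{\max}$ from Theorem~5.2 of \cite{GSZ08} to carry the estimate into $\Omega$.

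First I would record the two-sided comparison between $u|D^2u|$ and $1+|\kappa[\Sigma]|$. From $\kappa_i=u\kappa^e_i+\tfrac1w$ (see \eqref{eq1.30}), $a^e_{ij}=\tfrac1w\gamma^{ik}u_{kl}\gamma^{lj}$, and the explicit formulas \eqref{eq2.101}--\eqref{eq2.102} for $\gamma^{ij},\gamma_{ij}$, whose eigenvalues lie between $\tfrac1w$ and $w$, together with the global gradient bound $w\le\tfrac1\sigma$ of Proposition~\ref{prop4.1} to keep all these factors uniformly controlled, one obtains
\[ u\,|D^2u|\le C\bigl(1+|\kappa[\Sigma]|\bigr),\qquad |\kappa[\Sigma]|\le C\bigl(1+u\,|D^2u|\bigr)\quad\text{in }\Omega. \]
Moreover, since $f(\kappa)=\sigma$ with $f$ concave, homogeneous of degree one and $K^+_n\subset K$, inequality \eqref{eq1.90} gives $\sum\kappa_i\ge n\sigma>0$, so $\kappa_n\ge n\sigma-(n-1)\kappa_{\max}$ and $\kappa_{\max}\ge\sigma>0$; hence an upper bound on $\kappa_{\max}$ already controls $|\kappa_i|$ for every $i$. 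Thus it suffices to prove $\kappa_{\max}[\Sigma]\le C\varepsilon^{-2}$ on $\ol\Omega$.

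On $\partial\Omega$ this is immediate: Theorem~\ref{th5.1} gives $u|D^2u|\le C$ there with $C$ independent of $\varepsilon$, so by the comparison above $\kappa_{\max}[\Sigma]\le C$ on $\partial\Sigma$, again with $C$ independent of $\varepsilon$. To reach the interior I would apply the maximum principle of Theorem~5.2 of \cite{GSZ08}, whose proof considers an auxiliary function of the form $\log\kappa_{\max}$ plus a barrier term, differentiates $G(D^2u,Du,u)=\sigma$ twice, and uses only the ellipticity \eqref{eq1.40}, the concavity \eqref{eq2.50} of $F$, and the structure of $\mathcal L$ recorded in Section~\ref{sec2}; none of this is special to the curvature function treated there, so the argument applies under \eqref{eq1.40}--\eqref{eq1.80}. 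Since in the approximating problem \eqref{eq5.10} one has $\varepsilon\le u\le C$, the lower-order terms in that inequality which carry a factor $u^{-1}$ (arising from the hyperbolic metric) are controlled at worst by $C\varepsilon^{-2}$ after the usual absorption, and one concludes $\kappa_{\max}[\Sigma]\le C\varepsilon^{-2}$ on $\ol\Omega$ once it is $O(1)$ on $\partial\Sigma$. Translating back through the comparison of the previous paragraph yields $u|D^2u|\le C\varepsilon^{-2}$ in $\ol\Omega$.

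The step requiring care is the bookkeeping of the $\varepsilon$-dependence in the maximum-principle argument: one must verify that every coefficient entering the differential inequality for $\log\kappa_{\max}$ is bounded in terms of $w\le\tfrac1\sigma$ and of $\varepsilon\le u\le C$ only — the sharp gradient estimate of Proposition~\ref{prop4.1} is precisely what makes this possible — so that no power of $\varepsilon$ worse than $\varepsilon^{-2}$ appears; and one must use the $C^4(\Omega)$ hypothesis to justify differentiating the equation twice and running the maximum principle for the (a priori merely Lipschitz) function $\kappa_{\max}$ in the standard way.
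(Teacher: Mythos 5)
Your proposal is correct and follows essentially the same route as the paper: combine the boundary estimate of Theorem~\ref{th5.1} with the global maximum principle for $\kappa_{\max}$ from Theorem~5.2 of \cite{GSZ08}. The paper states this in a single sentence, while you also spell out the routine (but worth recording) equivalence $u|D^2u|\asymp 1+|\kappa[\Sigma]|$ via \eqref{eq1.30}, \eqref{eq2.101}--\eqref{eq2.102} and the gradient bound $w\le 1/\sigma$, together with the reduction to $\kappa_{\max}$ using $\sum\kappa_i\ge n\sigma$ from \eqref{eq1.90}.
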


Note that Corollary~\ref{cor5.1} suffices to complete the proof of
Theorem~\ref{th1.2} but that we cannot use it to pass to the limit
as $\e$ tends to zero. In the following section we address this
problem.

\bigskip

\section{Global estimates for second derivatives}
\label{sec6}

In this section we prove a maximum principle for the largest
hyperbolic principal curvature $\kappa_{\max} (x)$ of solutions of
$f(\kappa[u])=\sigma$. We make extensive use of our previous
calculations in Section 5 of \cite{GSZ08}. 

Let $\Sigma$ be the graph of $u$. For $x \in \Omega$ let $\kappa_{\max} (x)$ 
be the largest principal curvature of $\Sigma$ at the point 
$X = (x, u (x)) \in \Sigma$.  We define as in \cite{CNS5},
\[ M_0 = \max_{x \in \ol{\Omega}} \frac{\kappa_{\max} (x)}{\eta-a},\]
 where $\eta =  \nu^{n+1}=\ve \cdot \nu$ is the upward (Euclidean) unit 
normal to $\Sigma$ and $\inf \eta \geq \sigma>a$.

\begin{theorem}\label{thA1}
 Suppose that $f$ satisfies (\ref{eq1.40})-(\ref{eq1.80}) and
$\sigma \in (0,1)$ satisfies  $\sigma>\sigma_0$, where $\sigma_0$ is
the unique zero  in $(0,1)$ of
 \be \label{eqA10}
 \phi(a):=\frac83 a+\frac{22}{27}a^3-\frac5{27}(a^2+3)^{\frac32}.
 \ee
Let $u \in C^4 (\Omega)$ be an admissible solution of \eqref{eq5.10}
such that 
$\nu^{n+1}=\frac1w \geq \sigma$. Then  at an interior maximum of
$M_0$,
\[ \kappa_{\max} \leq \frac{C}{\sigma-\sigma_0} \]
 where $C$ is independent of $\e$. 
Numerical calculations show $0.3703<\sigma_0<0.3704$.
\end{theorem}

\begin{proof}
Suppose $M_0$ is attained at an interior point $x_0 \in \Omega$
and let  $X_0 = (x_0, u (x_0))$.
After a horizontal translation of the origin in $\bfR^{n+1}$,
 we may write $\Sigma$ locally near $X_0$ as a radial graph
\begin{equation}
\label{gsz-F200'}
X = e^v \vz, \;\;\; {\vz} \in \bfS^n_+ \subset \bfR^{n+1}
\end{equation}
with $X_0 = e^{v (\vz_0)} \vz_0$, $\vz_0 \in \bfS^n_+$, such that
$\nu (X_0) = \vz_0$. Note that the height function $u = y e^v$,
and upward unit (Euclidean) normal is $\nu=\frac{\vz-\nabla v}w$
where $y=\ve \cdot \vz$ and $ w = (1 + |\nabla v|^2)^{\frac{1}{2}}$
. (Here $\ve$ is the vertical unit vector
pointing upwards.) Hence $\eta=\frac{y-\ve \cdot \nabla v}w$.

We choose an orthonormal local frame $\tau_1, \ldots, \tau_n$ around
$\vz_0$ on $\bfS^n_+$ such that 
$v_{ij} = \nabla_{\tau_j} \nabla_{\tau_i} v$ is diagonal at $\vz_0$,
where $\nabla$ denotes the Levi-Civita connection on $\bfS^n$. 
As shown in Section 2.2 of \cite{GSZ08}, the
hyperbolic principal curvatures of the radial graph $X$ are the
eigenvalues of the matrix $A^{\vs} [v] = \{a^{\vs}_{ij}[v]\}$:
 \begin{equation}
\label{gsz-F20'}
 a^{\vs}_{ij}[v] := \frac{1}{w} (y \gamma^{ik} v_{kl} \gamma^{lj}
               - \ve \cdot \nabla v \delta_{ij})
\end{equation}
where 
\[ \gamma^{ij} = \delta_{ij} - \frac{v_i v_j}{w (1 + w)}\,
 . \]
We can then rewrite equation~\eqref{eq5.10} in the form
\begin{equation}
\label{gsz-F100'} F (A^{\vs} [v]) = \sigma.
\end{equation}
Henceforth we write $A [v] = A^{\vs} [v]$ and $a_{ij} = a^{\vs}_{ij} [v]$.

 Since $\nu (X_0) = \vz_0$,  $\nabla v (\vz_0) = 0$ and hence
\begin{equation}
\label{gsz-G55}
 a_{ij} = y v_{ij} = \kappa_i
\delta_{ij}
\end{equation}
at $\vz_0$ by (\ref{gsz-F20'}), where $\kappa_1, \ldots, \kappa_n$ 
are the principal curvatures of $\Sigma$ at $X_0$. 

We note that at $\vz_0$
\be \label{eq6.10}
\begin{aligned}
&y_i=\nabla_i( \ve\cdot \vz)=e\cdot \nabla_i \vz=\ve\cdot \tau_i\\
&(\ve\cdot \nabla v)_k=\ve\cdot v_{ik}\tau_i=y_iv_{ik}=y_k v_{kk}\\
&\eta_i=(\frac{y-e\cdot \nabla v}w)_i= y_i(1-v_{ii})\\
&a_{ij,k}=yv_{ijk}+y_k (v_{ii}-v_{kk}) \delta_{ij}\\
&v_{ijk}=v_{ikj}=v_{kij}\\
&y(a_{i1,1}-a_{11,i})=y_i(\kappa_i-\kappa_1)
\end{aligned}
\ee

We may assume
\begin{equation}
\label{gsz-G60}
 \kappa_1 = \kappa_{\max} (X_0).
\end{equation}
The function $\frac{a_{11}}{\eta -a}$, which is defined locally near
$\vz_0$, then achieves its maximum at $\vz_0$.  Therefore at  $\vz_0$,
\begin{equation}
\label{gsz-G70}
 \Big(\frac{a_{11}}{\eta-a}\Big)_i = 0, \;\; 1 \leq i \leq n
\end{equation}
 and
\begin{equation}
\label{gsz-G80}
y^2(y-a) F^{ii} a_{11, ii} - y^2 \kappa_1 F^{ii} \eta_{ii} 
=y^2(y-a)^2F^{ii} \Big(\frac{a_{11}}{\eta-a}\Big)_{ii}  \leq 0.
\end{equation}

The left hand side of (\ref{gsz-G80}) is exactly calculated (these calculations are long) in
 Proposition~5.3
and Lemma~5.4 of \cite{GSZ08}(with $\phi=\eta$) and yield
\begin{equation}
\label{gsz-G500}
\begin{aligned}
\sigma (y-a)\kappa_1^2  + & \, a\kappa_1 \sum f_i \kappa_i^2 + (a - 2 (1-y^2) (y - a))  \kappa_1 \sum f_i & \\
  \leq & \, 2\sigma \kappa_1 + \frac{2 a\kappa_1}{\alpha} \sum  f_i (\kappa_i- \alpha) \,y_i^2   \\
       & \, - \frac{2 a^2 \kappa_1^2}{\alpha^2 (y-a)} \sum_{i=2}^n
     \frac{f_i -f_1}{\kappa_1-\kappa_i} (\kappa_i- \alpha)^2 \,y_i^2
  \end{aligned}
\end{equation}
where $\alpha=\frac{a\kappa_1}{\kappa_1-(y-a)}$.
We note only that differentiation of the equation (\ref{gsz-F100'}) twice gives
\be
y^2(y-a)F^{ii}a_{ii,11}=-y^2(y-a)F^{ij,rs}a_{ij,1}a_{kl,1}
\ee
 and the last term in \eqref{gsz-G500} comes from this ``concavity
term''
\begin{equation}
\label{gsz-G300}
 -y^2(y-a) F^{ij,kl} a_{ij,1} a_{kl,1} \geq
 2(y-a)\sum_{i=2}^n \frac{f_i - f_1}{\kappa_1 - \kappa_i} (ya_{i1,1})^2
   \end{equation}
 where, since $(\frac{a_{11}}{\eta -a})_i = 0$, that is $a_{11,i}=\frac{\kappa_1}{y-a}\eta_i$, 
 we find using (\ref{eq6.10})
 \[ ya_{i1,1}= ya_{11,i} + (\kappa_i - \kappa_1) y_i=(a\kappa_1-(\kappa_1-(y-a))\kappa_i)\frac{y_i}{y-a}
            = -\frac{a \kappa_1 (\kappa_i- \alpha) y_i}{\alpha  (y-a)}. \]
 We also recall  the identity
 \[ \sum y_i^2 = 1-y^2\]
which has been used in \eqref{gsz-G500}, which follows from
\[y_i=\nabla_i(\ve \cdot \vz)=\ve\cdot \tau_i \,\mbox{and}\, \ve=\sum(\ve \cdot \tau_i)\tau_i + 
y \vz~. \]

It was shown in Section 6 of \cite{GSZ08} that the coefficient
$\gamma(y)$ of $\kappa_1 \sum f_i$ in \eqref{gsz-G500}, namely
   \begin{equation}
 \label{gsz-G550}
 \gamma(y)=a - 2 (1-y^2) (y - a) ~\,\,
\end{equation}
  satisfies
  \be \label{eqA50}
  \gamma(y)>\frac73 a -\frac4{27}a^3-\frac4{27}(a^2+3)^{\frac32}>0 
\hspace{.1in}\mbox{on $(a,1)$}
  \ee
  if $a^2>\frac18$. Therefore the terms on the left hand side of \eqref{gsz-G500} are all positive and we have one term of order $\kappa_1^2$. The only ``dangerous'' term on the right hand side of  \eqref{gsz-G500}  is the second one and we may throw away those terms in that sum where $\kappa_i \leq \alpha$. Thus we need only concern 
ourselves with
\[I=\{i: \kappa_i > \alpha > a\}. \]

Fix $\theta \in (0,1)$ to be chosen  later and let
\[ \begin{aligned}
 J = & \, \{i\in I: f_1 \leq \theta f_i\}, \\
 K = & \, \{i\in I: f_1>\theta f_i\}.
 \end{aligned} \]
Then
 \be \label{eqA80}
  a \kappa_1 \sum_{i\in J} f_i \kappa_i^2 > a^3
\kappa_1 \sum_{i\in J} f_i
 \ee
 and
 \be \label{eqA60}
  \frac{2 a\kappa_1}{\alpha} \sum_{i\in K} f_i (\kappa_i- \alpha) \,y_i^2
- a \kappa_1^3 f_1
\leq \kappa_1^2 (\frac2{\theta}-a\kappa_1) f_1 <0,
 \ee
 provided $\kappa_1> \frac2{a\theta}$. On the other hand  by Cauchy-Schwarz (or completing the 
 square),
  \be \label{eqA70}
\begin{aligned}
 \sum_{i\in J} & \, f_i (\kappa_i - \alpha) \,y_i^2
        - \frac{a \kappa_1}{\alpha (y-a)} \sum_{i\in J}
 \frac{f_i -f_1}{\kappa_1-\kappa_i} (\kappa_i- \alpha)^2 \,y_i^2 \\
 \,& \leq \sum_{i\in J} f_i y_i^2 \Big((\kappa_i - \alpha)  
     - \frac{(1-\theta)a}{\alpha(y-a)} (\kappa_i - \alpha)^2\Big)\\
 \,& \leq \frac{\alpha (y-a)(1-y^2)}{4 (1-\theta) a} \sum_{i\in J} f_i \\
 \, &  =  \frac{\alpha (a-\gamma(y))}{8 (1-\theta) a} \sum_{i \in J}f_i.
 \end{aligned}
 \ee
Combining \eqref{gsz-G500}, \eqref{eqA80}, \eqref{eqA60} and
\eqref{eqA70} we obtain
\begin{equation}
\label{gsz-G500'}
 \sigma (y-a)\kappa_1^2  + \phi_{\theta} (y)
 \kappa_1 \sum_{i \in J} f_i  \leq 2\sigma \kappa_1
\end{equation}
where the coefficient of $\kappa_1 \sum_{i\in J} f_i $ in
\eqref{gsz-G500'} is
\[ \phi_{\theta}(y)=\gamma(y)- \frac{a-\gamma(y)}{4(1-\theta)}+a^3.\]
Note that by \eqref{eqA50},
\be \label{eqA90}
\begin{aligned}
\,&\phi_0(y)=\frac54\{\gamma(y)+\frac45 a^3-\frac{a}5\} \\
\,&>\frac54\{\frac73a-\frac4{27}a^3-\frac4{27}
(a^2+3)^{\frac32}+\frac45 a^3-\frac{a}5 \} \\
\,&= \frac83 a+\frac{22}{27}a^3-\frac5{27}(a^2+3)^{\frac32}=: \phi(a).
\end{aligned}
\ee

For $a\in (0,1)$ it is easily checked that $\phi'(a)>0,\,
\phi(0)<0,\,\phi(1)>0$. Let $\sigma_0$ be the unique zero of
$\phi(a)$ in $(0,1)$. Numerical calculations show that
$0.3703<\sigma_0<0.3704$.

Now assume that $2\e_0:=\sigma-\sigma_0>0$ and choose
$a=\sigma_0+\e_0$. Then $\phi_{\theta}(y)>0$ on $(a,1)$ if
$\theta>0$ is chosen sufficiently small.  By Proposition
\ref{prop4.1}, $y-a\geq \sigma-a\geq \e_0$ at $\vz_0$, so by
\eqref{gsz-G500'} (assuming $\kappa_1> \frac2{a\theta}$) we obtain
$\e_0 \kappa_1^2 \leq 2\kappa_1 $. Hence
\[\kappa_1 \leq 2 \max \Big\{\frac{1}{a\theta}, \frac{1}{\e_0}\Big\}
   = 4 \max \Big\{\frac{1}{\theta (\sigma + \sigma_0)}, \frac{1}{\sigma - \sigma_0}\Big\}   \]
and so (since $\eta-a<1$)
\[\max_{x \in \ol{\Omega}} \kappa_{\max} (x)\leq \frac{ \kappa_1(\vz_0)}{\e_0}\leq 8 \max \Big\{\frac{1}{\theta (\sigma^2 - \sigma_0^2)}, \frac{1}{(\sigma - \sigma_0)^2}\Big\}   \]
completing the proof of Theorem \ref{thA1}.
\end{proof}

\end{document}